\newcommand{\qmod}[1]{\operatorname{QM}[#1]}
\newcommand{\qm}{\operatorname{QM} }
\newcommand{\xdt}[1]{x_{\Delta_{#1}}}
\newcommand{\udt}[1]{u_{\Delta_{#1}}}
\newcommand{\ydt}[1]{y_{\Delta_{#1}}}
\newcommand{\re}{\mathbb{R}}
\newcommand{\N}{\mathbb{N}}
\newcommand{\U}{\mathbb{U}}
\newcommand{\diag}{\mbox{diag}}
\newcommand{\lmd}{\lambda}
\newcommand{\Lmd}{\Lambda}
\newcommand{\eps}{\epsilon}
\newcommand{\Dt}{\Delta}
\def\af{\alpha}
\def\gm{\gamma}
\def\rank{\mbox{rank}}
\newcommand{\sig}{\sigma}
\newcommand{\Sig}{\Sigma}
\newcommand{\st}{\mbox{s.t.}}
\newcommand{\reff}[1]{(\ref{#1})}
\newcommand{\tr}[1]{\mbox{trace}(#1)}
\newcommand{\mc}[1]{\mathcal{#1}}
\newcommand{\ddd}{,\ldots,}
\newcommand{\lip}{\left<}
\newcommand{\rip}{\right>}
\newcommand{\bdes}{\begin{description}}
	\newcommand{\edes}{\end{description}}
\newcommand{\bal}{\begin{align}}
	\newcommand{\eal}{\end{align}}
\newcommand{\bnum}{\begin{enumerate}}
	\newcommand{\enum}{\end{enumerate}}
\newcommand{\bit}{\begin{itemize}}
	\newcommand{\eit}{\end{itemize}}
\newcommand{\bea}{\begin{eqnarray}}
	\newcommand{\eea}{\end{eqnarray}}
\newcommand{\be}{\begin{equation}}
	\newcommand{\ee}{\end{equation}}
\newcommand{\baray}{\begin{array}}
	\newcommand{\earay}{\end{array}}
\newcommand{\bsry}{\begin{subarray}}
	\newcommand{\esry}{\end{subarray}}
\newcommand{\bca}{\begin{cases}}
	\newcommand{\eca}{\end{cases}}
\newcommand{\bcen}{\begin{center}}
	\newcommand{\ecen}{\end{center}}
\newcommand{\bbm}{\begin{bmatrix}}
	\newcommand{\ebm}{\end{bmatrix}}
\newcommand{\bmx}{\begin{matrix}}
	\newcommand{\emx}{\end{matrix}}
\newcommand{\bpm}{\begin{pmatrix}}
	\newcommand{\epm}{\end{pmatrix}}
\newcommand{\btab}{\begin{tabular}}
	\newcommand{\etab}{\end{tabular}}
\newtheorem{theorem}{Theorem}[section]
\theoremstyle{definition}
\newtheorem{example}[theorem]{Example}
\newtheorem{exm}[theorem]{Example}
\newtheorem{alg}[theorem]{Algorithm}
\newtheorem{remark}[theorem]{Remark}
\numberwithin{equation}{section}
\begin{document}

\title[Sparse Polynomial Optimization with Matrix Constraints]
{Sparse Polynomial Optimization with Matrix Constraints}

\author[Jiawang~Nie]{Jiawang Nie}
\author[Zheng~Qu]{Zheng Qu}
\author[Xindong~Tang]{Xindong Tang}
\author[Linghao~Zhang]{Linghao Zhang}

\address{Jiawang Nie and Linghao Zhang,
	Department of Mathematics, University of California San Diego,
	9500 Gilman Drive, La Jolla, CA, USA, 92093.}
\email{njw@math.ucsd.edu,liz010@ucsd.edu}

\address{Zheng Qu, 
	School of Mathematical Sciences, 
	Shenzhen University, 
	518061, P.R. China.}
\email{zhengqu@szu.edu.cn}

\address{Xindong Tang, 
	Department of Mathematics,
	Hong Kong Baptist University,
	Kowloon Tong, Kowloon, Hong Kong.}
\email{xdtang@hkbu.edu.hk}

\date{}

\keywords{matrix, polynomial, sparsity, moment, tight relaxation}

\subjclass{90C23, 65K10, 90C22}

\begin{abstract}
This paper studies the hierarchy of sparse matrix Moment-SOS relaxations for solving sparse polynomial optimization problems with matrix constraints.
First, we prove a sufficient and necessary condition for the sparse hierarchy to be tight.
Second, we discuss how to detect the tightness and extract minimizers.
Third, for the convex case,
we show that the hierarchy of the sparse matrix Moment-SOS relaxations is tight, under some general assumptions.
In particular, we show that the sparse matrix Moment-SOS relaxation is tight for every order when the problem is SOS-convex.
Numerical experiments are provided to show the efficiency of the sparse relaxations.
\end{abstract}

\maketitle

\section{Introduction}

Let $x \coloneqq (x_1, \ldots, x_n)$ be an $n$-dimensional vector of variables,
and $\Dt_1, \ldots, \Dt_m$ be subsets of $[n] \coloneqq \{1,\ldots, n\}$
such that $\Dt_1\cup\cdots \cup \Dt_m = [n]$.
For each $\Dt_i = \{j_1, \ldots, j_{n_i} \}$, denote the subvector
$\xdt{i}  \coloneqq  (x_{j_1}, \ldots, x_{j_{n_i}} ).$
We consider the sparse matrix polynomial optimization problem
\be \label{spar:matPOP}
\left\{ \baray{cl}
\displaystyle \min_{x\in\re^n} & f(x)  \coloneqq f_1(\xdt{1}) + \cdots + f_m(\xdt{m}) \\
\st & G_i(\xdt{i}) \succeq 0  ,\, i = 1, \ldots, m.
\earay \right.
\ee
In the above, each $f_i$ is a polynomial in $\xdt{i}$ and
each $G_i$ is a symmetric matrix polynomial in $\xdt{i}$.
We denote by $f_{\min}$ the minimum value of \reff{spar:matPOP} and let
\be\label{K_di}
K_{\Dt_i} \coloneqq \{\xdt{i} \in \re^{n_i}: G_i(\xdt{i}) \succeq 0 \}.
\ee
The feasible set of \reff{spar:matPOP} is
\[
K = \bigcap\limits_{i=1}^{m} \{x \in \re^n : G_i(\xdt{i}) \succeq 0 \}.
\]

\begin{example}\label{ex:exp_sol}
Consider the following matrix polynomial optimization:
\be\label{eq:explicit}
\left\{
\begin{array}{cl}
	\displaystyle \min_{x\in \re^3} & -x_1x_2 + (x_3-x_2)^2 \\
	\st & G_i(\xdt{i})\succeq 0,\ i=1,2.
\end{array}\right.
\ee
In the above, $\Dt_1 = \{1,2\}$, $\Dt_2 = \{2,3\}$, $f_1 = -x_1x_2$, $f_2 = (x_3-x_2)^2$, and
\[\begin{aligned}
	G_1(x_1,x_2)\,\coloneqq\,\left[\begin{array}{cc}
		x_1 & x_1x_2 \\ x_1x_2 & x_2^2
	\end{array}
	\right],\quad
	G_2(x_2,x_3)\,\coloneqq\,\left[\begin{array}{ccc}
		x_2+x_3 & x_2 & 0 \\ x_2 & x_2 & 0\\
		0 & 0 & 1-x_2
	\end{array}
	\right].
\end{aligned}\]
One can check that $f_{\min} = -1$ and the minimizer is $x^* = (1,1,1)$.
This is because if $ x = (x_1,x_2,x_3)$ is feasible, then either $x_2 = 0$, $x_1 \ge 0$, $x_3 \ge 0$, or 
$0 < x_2\le 1$, $0\le x_1 \le 1$, $x_3 \ge 0$,
which implies that $f(x) \ge -1$ for all feasible $x$.
\end{example}

Matrix constrained polynomial optimization problems can be solved by the {\it dense} matrix Moment-SOS hierarchy of semidefinite relaxations, which are introduced in \cite{henrion2006convergent,Hol04}.
Denote the matrix set $\mc{G} \coloneqq \{1, G_1 \ddd G_m\}$. The quadratic module of polynomials generated by $\mc{G}$ is
\[
\qm[\mc{G}] \coloneqq \Big\{ \sum_{j=1}^{s} P_j^T B_j P_j : B_j \in \mc{G}, s \in \N, P_j \in \re[x]^{\text{len}(B_j)} \Big\}.
\]
In the above, len($B_j$) denotes the length of $B_j$.
For an integer $k$, the degree-$2k$ truncation $\qm[\mc{G}]_{2k}$ is the set of all polynomials that can be represented as above, with $2\deg(P_j)+\deg(B_j) \le 2k$ for every $j$.
The $k$th order dense SOS relaxation of \reff{spar:matPOP} is
\be\label{dense_sos}
\left\{ \baray{cl}
\max & \gm \\
\st & f - \gm \in   \qmod{\mc{G}}_{2k}.
\earay \right.
\ee
Its dual optimization problem is the $k$th order dense moment relaxation:
\be\label{dense_mom}
\left\{ \baray{cl}
\min & \langle f, y \rangle \\
\st &  L_{G_i}^{(k)}[y] \succeq 0 ,\, i = 1, \ldots, m,   \\
&  M_k[y]\succeq 0,  y_0 = 1, \, \\
& y \in \re^{\N^n_{2k} } .
\earay \right.
\ee
We refer to Section~\ref{ssc:denmom} for the above notation.
For $k=1,2, \ldots$, the sequence of \reff{dense_sos}-\reff{dense_mom} is called the dense matrix Moment-SOS  hierarchy. It produces a sequence of lower bounds for the minimum value $f_{\min}$ of \reff{spar:matPOP}, which converges to $f_{\min}$ under the archimedean condition \cite{henrion2006convergent,Hol04}.
This matrix Moment-SOS hierarchy is said to be {\it tight} if the optimal value \reff{dense_sos} equals $f_{\min}$ for some $k$.
It is shown in \cite{Huang2024} that this hierarchy is tight under some optimality conditions.
We refer to \cite{Guo2024robust,Huang2024complexity,Huang2024,nie2011PMI,SchererHol06}
for related work about matrix constrained polynomial optimization.

When the number of variables or the relaxation order increases,
the sizes of the dense matrix Moment-SOS relaxations \reff{dense_sos}-\reff{dense_mom} grow rapidly.
It is expensive to solve (\ref{spar:matPOP}) on a large scale.
Thus, it is important to exploit sparsity to
improve computational efficiency.
In this paper, we focus on the matrix polynomial optimization problem
with the sparsity pattern as in (\ref{spar:matPOP}).
This is referred to as {\it correlative sparsity} in some literature,
to be distinguished from {\it term sparsity}  (\cite{TSSOS,ChTS}).
Sparse polynomial optimization has wide applications.
We refer to \cite{grimm2007note,huang2024sparse, Klep2022, Lasserre06,nie2009sparse,NQZT24,Qu2024,WangMag21,CSTSSOS,waki2006sums} for related work on sparsity.
Moreover, we refer to \cite{Kim2011,Zheng2023chordal} for representations of sparse matrix polynomials.
In Section~\ref{sc:control}, we give an application of sparse matrix polynomial optimization in the multisystem static $\mc{H}_2$ controller synthesis.

In this paper, we study the {\it sparse} matrix Moment-SOS hierarchy of semidefinite relaxations for solving \reff{spar:matPOP}.
For a given degree $k$,
the $k$th order sparse SOS relaxation is
\be  \label{ksos}
\left\{ \baray{cl}
\max & \gm \\
\st & f - \gm \in   \qmod{G}_{spa, 2k}.
\earay \right.
\ee
Its dual optimization problem is the $k$th order sparse moment relaxation:
\be  \label{kmom}
\left\{ \baray{cl}
\min & \langle f, y \rangle \coloneqq
\langle f_1, \ydt{1} \rangle + \cdots + \langle f_m, \ydt{m} \rangle  \\
\st &  L_{G_i}^{(k)}[\ydt{i}] \succeq 0 ,\, i = 1, \ldots, m,   \\
&  M_{\Dt_i}^{(k)}[\ydt{i}]\succeq 0,\, i = 1, \ldots, m,   \\
&  y_0 = 1, \,  y \in \re^{ \U_{k} } .
\earay \right.
\ee
The optimal values of \reff{ksos} and  \reff{kmom}
are denoted as $f_k^{spa}$ and $f_k^{smo}$ respectively.
The symbol $\ydt{i}$ denotes the subvector of $y$
that is labelled by monomial powers in $\xdt{i}$.
We refer to Section~\ref{sc:pre} for the notation in the above.
As we increase the relaxation order $k$, the sequence of relaxation problems
(\ref{ksos})-(\ref{kmom}) gives the {\it sparse matrix Moment-SOS hierarchy}
for solving (\ref{spar:matPOP}). We have convergence
$f^{spa}_k\to f_{\min}$ when $\Dt_1\ddd \Dt_m$ satisfy the
{\it running intersection property} (RIP) (see \cite{Lasserre06}) and every
$\qm_{\Dt_i}[G_i]$ is {\it archimedean} \cite{Kojima09}.
Compared with the dense relaxations \reff{dense_sos}-\reff{dense_mom}, the sparse version \reff{ksos}-\reff{kmom} has
positive semidefinite (psd) matrix constraints or variables with much smaller sizes.

For the special case where every matrix $G_i$ is diagonal,
(\ref{spar:matPOP}) reduces to the scalar constrained sparse polynomial optimization problem, 
and the sparse relaxations \reff{ksos}-\reff{kmom}  reduce to the sparse Moment-SOS relaxation hierarchy studied in~\cite{Lasserre06,nie2009sparse,waki2006sums}.

In the recent work \cite{NQZT24},
it is shown that the sparse Moment-SOS hierarchy (for the scalar case) is tight
if and only if the objective function can be written as a sum of sparse nonnegative polynomials,
each of which belongs to the corresponding sparse quadratic module.
However, there is very little work on the tightness of the sparse matrix Moment-SOS relaxations.

\subsection*{Contribution}

This paper investigates conditions for the tightness of the sparse matrix Moment-SOS hierarchy (\ref{ksos})-(\ref{kmom}).
Our major results are:

\begin{itemize}
	\item We prove a sufficient and necessary condition for the tightness of the sparse matrix Moment-SOS hierarchy of (\ref{ksos})-(\ref{kmom}).
	Specifically,
	we show that $f-f_{\min} \in \qmod{G}_{spa, 2k}$ (this implies the relaxation (\ref{ksos}) is tight) if and only if there exist sparse polynomials $p_i \in \re[\xdt{i}]_{2k}$ such that
	\[
	\boxed{
		\begin{gathered}
			p_1  + \cdots + p_m  + f_{\min}  =0, \\
			f_i + p_i \in \qm_{\Dt_i}[G_i]_{2k}, \ i=1,\ldots, m.
		\end{gathered}
	}
	\]
	The above means that $f - f_{\min}$ can be equivalently expressed as a sum of sparse nonnegative polynomials, each of which belongs to the corresponding sparse quadratic module,
	i.e.,
	\[
	f- f_{\min} = (f_1 + p_1) + \cdots + (f_m + p_m).
	\]
	
	\item We give explicit conditions for the tightness of the sparse hierarchy of (\ref{ksos})-(\ref{kmom}) when the sparse matrix polynomial optimization is convex.
	In particular, we show that if the objective and constraining matrix polynomials are SOS-convex, then the moment relaxation (\ref{kmom}) is tight for all relaxation orders.
	
	\item We show that under certain conditions, the tightness of sparse matrix Moment-SOS hierarchy can be detected by the flat truncation, and minimizers can be extracted from moment matrices.
\end{itemize}

This paper is organized as follows. Some basics on matrix polynomial optimization and algebraic
geometry are reviewed in Section~\ref{sc:pre}.
Section~\ref{sc:char} gives a characterization for the tightness of the sparse matrix Moment-SOS hierarchy.
In Section~\ref{sc:flat}, we study the flat truncation for certifying tightness of moment relaxations.
Section~\ref{sc:convex} gives some sufficient conditions for the tightness when the sparse matrix polynomial optimization is convex.
Some numerical experiments are presented in Section~\ref{sc:ne}.

\section{Preliminaries}
\label{sc:pre}

\subsection*{Notation}

Denote by $\re$ (resp., $\N$) the set of real numbers (resp., nonnegative integers).
For a positive integer $k$, let $[k] \coloneqq \{1\ddd k\}$. For a real number $t$, $\lfloor t\rfloor$ (resp., $\lceil t \rceil$) denotes the largest integer that is smaller than or equal to (resp., the smallest integer that is larger than or equal to) $t$.
For a positive integer $n$, $\re^n$ (resp., $\N^n$) stands for the set of $n$-dimensional vectors whose entries are real numbers (resp., nonnegative integers).
For a matrix $X$, $X^T$ denotes the transpose of $X$.
For $u,v\in \re^n$, $\langle u, v\rangle \coloneqq v^Tu$.
The Euclidean norm of $u$ is $\|u\| \coloneqq \sqrt{u^Tu}$.
For a positive integer $\ell$,
denote by $\mc{S}^{\ell}$ the set of all $\ell$-by-$\ell$ real symmetric matrices. For $X\in \mc{S}^{\ell}$, $X \succeq 0$ (resp., $X \succ 0$) means $X$ is positive semidefinite (resp., positive definite), and we denote by $\mc{S}^{\ell}_+$ the set of all $\ell$-by-$\ell$ positive semidefinite matrices. For $X,Y\in \mc{S}^{\ell}$,
$
\langle X , Y \rangle \coloneqq  \tr{XY}
$
and  $X\succeq Y$ means $X-Y\succeq 0$.

For $x:=(x_1,\ldots,x_n)$ and $\af = (\af_1, \ldots, \af_n) \in \N^n$,
denote $x^\alpha \coloneqq x_1^{\af_1} \cdots x_n^{\af_n}$.
The ring of polynomials with real coefficients in $x$  is denoted as  $\re[x]$.
Denote by  $\re[x]^{\ell}$ (resp., $\re[x]^{\ell_1\times \ell_2}$)
the set of $\ell$-dimensional real polynomial vectors
(resp. $\ell_1$-by-$\ell_2$ real matrix polynomials) in $x$.
Denote by $\mc{S}\re[x]^{\ell \times \ell}$ the set of $\ell$-by-$\ell$
symmetric matrix polynomials in $x$ with real coefficients.
For $p\in \re[x]$, $\deg(p)$ denotes the degree of $p$.
For a degree $k$, $\re[x]_k$ denotes the subset of polynomials in $\re[x]$ with degrees at most $k$.
For $P\in \re[x]^{\ell_1\times \ell_2}$, let
\[
\deg(P):=\max \left\{\deg(P_{ij}): i\in [\ell_1],j\in [\ell_2]\right\}.
\]
For $f \in \re[x]$, $\nabla f(x)$ denotes the gradient of $f$ at the point $x$
and $\nabla^2 f(x)$ denotes the Hessian of $f$ at $x$.
For the matrix polynomial $G\in \mc{S}\re[x]^{\ell \times \ell}$,
the derivative of $G$ at a point $x$ is the linear mapping $\nabla G(x) \colon \re^n \to \mc{S}^{\ell}$ such that
\be\label{eq:deri_G}
d\coloneqq(d_1\ddd d_n) \mapsto \nabla G(x)[d] \, \coloneqq \, \sum_{i=1}^n d_i  \nabla_{x_i} G(x).
\ee
In the above,
$
\nabla_{x_i} G(x) \, \coloneqq \, \frac{\partial G(x)}{\partial x_i}.
$
The adjoint $\nabla G(x)^*$ is the linear mapping from $\mc{S}^{\ell}$
to $\re^{n}$ such that for $X \in \mc{S}^{\ell}$,
\be\label{eq:adj_deri_G}
\nabla G(x)^*[X]  = \left[ \lip \nabla_{x_1} G(x),X\rip \ \ldots\ \lip \nabla_{x_n} G(x),X\rip \right]^T.
\ee
At a point $u\in\re^n$ with $G(u)\succeq 0$, the {\it nondegeneracy condition} (NDC) holds for $G$ at $u$ if
\be\label{eq:NDC} \mbox{Im}\,\nabla G(u) + T  = \mc{S}^{\ell},\ee
where $\mbox{Im}\,\nabla G(u)$ is the image of the linear map $G(u)$, and
\[
T\,\coloneqq\,\{ X\in \mc{S}^{\ell} \colon v^T X v = 0 \ \forall\,v\in\ker \, G(u) \}.
\]
Here, $\ker \, G(u)$ denotes the kernel of $G(u)$, i.e., the null space of $G(u)$.
We refer to~\cite{Shapiro97,Sun06} for more details about nonlinear semidefinite programs.

For a subset $\Dt_i \subseteq [n]$, denote by $\re^{\Dt_i}$ the space of real vectors in the form of $\xdt{i}$.
The ring of polynomials in $\xdt{i}$ with real coefficients is denoted as $\re[\xdt{i}]$.
The notation $\re[\xdt{i}]^k$, $\re[\xdt{i}]^{k_1\times k_2}$ and $\mc{S}\re[\xdt{i}]^{\ell \times \ell}$ are similarly defined.
For $f\in \re[x]$ (resp., $G\in \mc{S}\re[x]^{\ell \times \ell}$), $\nabla_{\xdt{i}} f$
(resp., $\nabla_{\xdt{i}}G$) denotes the vector of partial derivatives of $f$ (resp., $G$) with respect to variables in $\xdt{i}$.
The Hessian $\nabla^2_{\xdt{i}} f$ is similarly defined.

\subsection{SOS polynomials and quadratic modules}
\label{ssc:sospoly}
A polynomial $\sigma \in \re[x]$ is said to be a {\it sum of squares} (SOS)
if there exist polynomials $p_1, \ldots, p_s \in \re[x]$ such that
$\sigma = p_1^2 + \cdots + p_s^2$.
The cone of SOS polynomials in $x$ is denoted as $\Sigma[x]$, and
\[
\Sig[x]_{2k} \coloneqq \Sig[x] \cap \re[x]_{2k}.
\]
The cone of $t$-by-$t$ SOS matrix polynomials in $x$ is
\[
\Sig[x]^{t \times t} \coloneqq \Big\{  P^T P:   P \in \re[x]^{s\times t} \mathrm{~for~some~} s\in \N \Big\}.
\]
For each $i\in [m]$,  $\Sigma [x_{\Delta_i}]$ denotes the set of SOS polynomials in $\xdt{i}$.
The truncation $\Sigma [x_{\Delta_i}]_{2k}$ and the cone of SOS matrix polynomials $\Sigma [x_{\Dt_i}]^{t\times t}$ in $\xdt{i}$ are similarly defined.

For $G_i\in \mc{S}\re[x_{\Dt_i}]^{\ell_i \times \ell_i}$, its {\it quadratic modules} in
$\mc{S}\re[x_{\Dt_i}]^{t \times t}$ and $\mc{S}\re[x]^{t \times t}$ are respectively:
\[
\qm_{\Dt_i}[G_i]^{t\times t} \coloneqq \Sig[x_{\Dt_i}]^{t \times t} + \Big\{\sum_{j=1}^{s} P^T_jG_iP_j : s \in \N, P_j \in \re[x_{\Dt_i}]^{ \ell_i\times t} \Big\},
\]
\[
\qm[G_i]^{t\times t} \coloneqq \Sig[x]^{t \times t} + \Big\{\sum_{j=1}^{s} P^T_jG_iP_j : s \in \N, P_j \in \re[x]^{ \ell_i\times t} \Big\}.
\]
When $t=1$, we denote
\[
\qm[G_i]:=\qm[G_i]^{1\times 1}, \quad \qm_{\Dt_i}[G_i]:=\qm_{\Dt_i}[G_i]^{1\times 1}.
\]
The quadratic module $\qm_{\Dt_i}[G_i]$ is said to be {\it archimedean} if there exists a scalar $R>0$ such that
$ R-\|x_{\Delta_i}\|^2 \in \qm_{\Dt_i}[G_i]$.
For an even degree $2k$,  denote the $2k$-truncation of $\qm_{\Dt_i}[G_i]$:
\[
\qm_{\Dt_i}[G_i]_{2k} \coloneqq \Sig[x_{\Dt_i}]_{2k} + \Big\{\sum_{j=1}^{s} P_j^TG_iP_j \Big\vert
\begin{array}{l}
s \in \N, P_j\in \re[x_{\Dt_i}]^{\ell_i}
\\ 2\deg(P_j) + \deg(G_i) \leq 2k
\end{array}
\Big \}.
\]
The truncation  $\qm[G_i]_{2k}$ is defined similarly.
For a matrix polynomial tuple $G \coloneqq (G_1 \ddd G_m)$ such that each $G_i\in\mc{S}\re[x_{\Dt_i}]^{\ell_i \times \ell_i}$, we denote
\be\label{eq:qm_spa}
\left\{
\begin{array}{rcl}
\qm[G]_{spa} & \coloneqq & \qm_{\Dt_1}[G_1] + \cdots + \qm_{\Dt_m}[G_m],\\
\qm[G]_{spa,2k} & \coloneqq & \qm_{\Dt_1}[G_1]_{2k} + \cdots + \qm_{\Dt_m}[G_m]_{2k}.
\end{array}
\right.
\ee

\subsection{Dense moments}
\label{ssc:denmom}

For a power vector $\alpha \coloneqq (\alpha_1, \ldots, \alpha_n) \in \mathbb{N}^n$, denote $|\alpha| \coloneqq \alpha_1 + \cdots + \alpha_n$.
The notation
\[
\mathbb{N}_d^n \coloneqq \{\alpha \in \mathbb{N}^n : |\alpha| \leq d\}
\]
stands for the set of monomial powers with degrees at most $d$.
The symbol $\mathbb{R}^{\mathbb{N}_d^n}$ denotes the space of all real vectors labeled by $\alpha \in \mathbb{N}_d^n$.
A vector $y \coloneqq \left(y_{\alpha}\right)_{\alpha \in \mathbb{N}^n_{2k}}$ is called a \textit{truncated multi-sequence} (tms) of degree $2k$.
For $y \in \mathbb{R}^{\mathbb{N}^n_{2k}}$, the \textit{Riesz functional} determined by $y$ is the linear functional $\mathscr{L}_y$ acting on $\re[x]_{2k}$ such that
\[
\mathscr{L}_y\Big( \sum\limits_{\alpha \in \mathbb{N}^n_{2k}} p_{\alpha}x^{\alpha} \Big)
\coloneqq \sum\limits_{\alpha \in \mathbb{N}^n_{2k}} p_{\alpha}y_{\alpha}.
\]
For convenience, we denote
\[
\langle p, y \rangle \coloneqq \mathscr{L}_y(p), \quad p\in \re[x]_{2k}.
\]
The \textit{localizing matrix} of $p$ generated by $y$ is
\[
L_p^{(k)}[y] \coloneqq \mathscr{L}_y(p(x) \cdot [x]_{s_1}[x]_{s_1}^T).
\]
In the above, the linear operator is applied entry-wise and
\[
s_1 \coloneqq \lfloor k - \deg(p)/2 \rfloor, \quad [x]_{s_1} \coloneqq (x^{\alpha})_{\alpha\in \N^n_{s_1}}.
\]
In particular, for $p=1$, we get the \textit{moment matrix} $M_k[y] \coloneqq L_1^{(k)}[y]$.
More details for this can be found in \cite{nie2023moment}.
For a matrix polynomial $F \in \mc{S}\re[x]^{\ell \times \ell}$ with entries as
$
F \, \coloneqq  \,
\big(  F_{st} \big)_{1 \le s,t \le \ell},
$
its localizing matrix is the $\ell \times \ell$ block matrix 
(let $s_2 \coloneqq \lfloor k - \deg(F)/2 \rfloor$)
\[
L_{F}^{(k)}[y] \, \coloneqq  \,
\big( L_{F_{st}}^{({k})}[y] \big)_{1 \le s,t \le \ell},
\]
where $L_{F_{st}}^{({k})}[y] \, \coloneqq  \,
\mathscr{L}_y(F_{st}(x) \cdot [x]_{s_2}[x]_{s_2}^T)$.

\subsection{Sparse moments}
\label{ssc:sparmom}

For each $\xdt{i}$, denote the set of monomial powers
\[
\N^{\Dt_i}\coloneqq \{\af = (\alpha_1\ddd \alpha_{n})\in\N^n :
\alpha_j = 0\,\, \forall j\notin \Dt_{i}\}.
\]
For a degree $d$, denote
$
\N_{d}^{\Dt_i} \coloneqq \{\af\in \N^{\Dt_i}:   |\af| \le d \}.
$
The vector of all monomials in $\xdt{i}$ with degrees up to $d$,
listed in the graded lexicographic order, is denoted as 
\be \label{[xDti]d}
[\xdt{i}]_d \, = \,  \big( x^\af \big)_{ \af \in \N_{d}^{\Dt_i}   }.
\ee
Denote by $\re^{\N_d^{\Dt_i}}$
the space of all real vectors whose entries are labeled by $\af \in \mathbb{N}_d^{\Dt_i}$.
A vector $y_{\Dt_i} \in \re^{\N_{d}^{\Dt_i}}$
is called a truncated multi-sequence (tms) of degree $d$.
The Riesz functional determined by $y_{\Dt_i}$
is the linear functional $\mathscr{L}_{y_{\Dt_i}}$ acting on $\re[\xdt{i}]$ such that
\[
\mathscr{L}_{y_{\Dt_i}}(x^{\alpha}) \, = \,
(y_{\Dt_i} )_{\alpha} \quad \mbox{for each} \quad \alpha\in \mathbb{N}_d^{\Dt_i} .
\]
This induces the bilinear operation $\langle \cdot, \cdot \rangle:
\re[x_{\Dt_i}]_d \times \re^{\mathbb{N}_d^{\Dt_i}}\rightarrow \re$ such that
\be \label{<p,yDt>}
\langle p, y_{\Dt_i} \rangle \coloneqq \mathscr{L}_{y_{\Dt_i}}(p).
\ee
When $d=2k$, the  {\it localizing matrix} of $p\in\re[\xdt{i}]_{2k}$,
generated by $y_{\Dt_i}$, is 
\[
\baray{rcl}
L_{p}^{(k)}[y_{\Dt_i}] &\coloneqq& \mathscr{L}_{y_{\Dt_i}}\left( p(\xdt{i})
[\xdt{i}]_{k_1}[\xdt{i}]^T_{k_1} \right).
\earay
\]
In the above, the Riesz functional is applied entry-wise and
\[
k_1 \coloneqq \lfloor k-\deg(p)/2 \rfloor.
\]
In particular, when $p=1$ is the constant one polynomial, we get the {\it moment matrix}
\be\label{momentmx}
M_{\Dt_i}^{(k)}[\ydt{i}] \coloneqq L_{1}^{(k)}[y_{\Dt_i}].
\ee
For a matrix polynomial $G_i\in \mc{S}\re[\xdt{i}]^{\ell \times \ell}$ with entries as
\[
G_i \, \coloneqq  \,
\big(  (G_i)_{st} \big)_{1 \le s,t \le \ell},
\]
its localizing matrix generated by $\ydt{i}$ is the $\ell \times \ell$ block matrix

\be\label{local}
L_{G_i}^{(k)}[y] \, \coloneqq  \,
\big( L_{(G_i)_{st}}^{({k})}[\ydt{i}] \big)_{1 \le s,t \le \ell},
\ee
where 
$
L_{(G_i)_{st}}^{({k})}[\ydt{i}] \, \coloneqq  \,
\mathscr{L}_{\ydt{i}}((G_i)_{st}(\xdt{i}) \cdot [\xdt{i}]_{k_2}[\xdt{i}]_{k_2}^T)$ and
$k_2 \coloneqq \lfloor k - \deg(G_i)/2 \rfloor$.
If $\deg(G_i) \le d$ and $y_{\Dt_i} \in \re^{\N_{d}^{\Dt_i}}$, we define
\be\label{Giyi}
G_i[\ydt{i}]\coloneqq \big(\langle (G_i)_{{st}}, y_{\Dt_i} \rangle
\big)_{1 \le s,t \le \ell}.
\ee
Note that $G_i[\ydt{i}]$ is a principal submatrix of $L_{G_i}^{(k)}[\ydt{i}]$,
consisting of the $(1,1)$-entries of its blocks.
Therefore, if $L_{G_i}^{(k)}[\ydt{i}] \succeq 0$, then $G_i[\ydt{i}] \succeq 0$.

For a given degree $k$, denote the monomial power set
\be\label{u}
\U_k \coloneqq \bigcup_{i =1}^m  \mathbb{N}_{2k}^{\Dt_i}.
\ee
Let $\re^{\U_k}$ denote the space of real vectors labeled such that
\[
y  =  (y_{\af})_{ \af \in \U_k }.
\]
For a given $y\in\re^{\U_k}$, we denote the subvector
\be \label{yDti}
y_{\Dt_i} \, \coloneqq \, (y_{\af})_{ \af \in \mathbb{N}_{2k}^{\Dt_i} }.
\ee
For the objective $f$ as in \reff{spar:matPOP} and $y\in \re^{\U_k}$, we have
\be \label{<f,y>}
\langle f, y \rangle  \, \coloneqq \,
\langle f_1, y_{\Dt_1} \rangle + \cdots + \langle f_m, y_{\Dt_m} \rangle .
\ee

\subsection{Convex matrix polynomials}
\label{sc:pre:cvx}
A matrix polynomial $P(x) \in \mc{S}\re[x]^{\ell \times \ell}$ is said to be {\it convex}
over a convex domain $\mc{D} \subseteq \re^n$
if for all $u, v \in \mc{D}$
and for all $0 \le \lmd \le 1$,
it holds that ($X \preceq Y$ means $Y - X \succeq 0$)
\[
P(\lmd u + (1-\lmd)v) \preceq \lmd P(u) + (1-\lmd)P(v).
\]
If $-P(x)$ is convex over $\mc{D}$, then $P$ is called {\it concave} over $\mc{D}$.
The matrix polynomial $P(x)$ is convex if and only if for all
$\xi\in\re^{\ell}$ and for all $u\in \mc{D}$,
the Hessian matrix $\nabla^2 (\xi^T P(x)\xi)$ is positive semidefinite at $x=u$.
Furthermore, $P(x)$ is said to be {\it SOS-convex}
if for every $\xi \in \re^{\ell}$, there exists a matrix polynomial $Q(x)$ such that
\[
\nabla^2 \left(\xi^T P(x) \xi\right) \, =  \, Q(x)^T Q(x) .
\]
The coefficients of the above $Q(x)$ may depend on $\xi$.
Similarly, if $-P(x)$ is SOS-convex, then $P(x)$ is called {\it SOS-concave}.
We refer to \cite{nie2011PMI} and \cite[Chapter~10.5]{nie2023moment}
for more details about convex matrix polynomials.

\section{Sufficient and necessary conditions for tightness}
\label{sc:char}

In this section, we give a sufficient and necessary condition for
the sparse matrix Moment-SOS hierarchy to be tight for solving \reff{spar:matPOP}.
Denote the degree
\begin{equation}\label{eq:k0}
k_0:=\max_{i\in [m]} \Big( \lceil\deg(f)/2\rceil,  \lceil\deg(G_i)/2\rceil \Big).
\end{equation}
For $k\geq k_0$, the $k$th order sparse matrix SOS relaxation for \reff{spar:matPOP} is
\be  \label{ksos:MPOP}
\left\{ \baray{ccl}
f_k^{spa} \coloneqq  & \max & \gm \\
& \st & f - \gm \in   \qmod{G}_{spa, 2k}.
\earay \right.
\ee
Its dual optimization problem is the $k$th order sparse matrix moment relaxation
\be  \label{kmom:MPOP}
\left\{ \baray{ccll}
f_k^{smo} \coloneqq  & \min & \langle f, y \rangle \coloneqq \langle f_1, \ydt{1} \rangle + \cdots + \langle f_m, \ydt{m} \rangle  \\
& \st &  L_{G_i}^{(k)}[\ydt{i}] \succeq 0 ,\, i = 1, \ldots, m,   \\
&    &  M_{\Dt_i}^{(k)}[\ydt{i}]\succeq 0,\, i = 1, \ldots, m,   \\
&    &  y_0 = 1, \,  y \in \re^{ \U_{k} } .
\earay \right.
\ee
We refer to Section~\ref{sc:pre} for the above notation.
Recall that $f_{\min}$ denotes the minimum value of \reff{spar:matPOP}.
When the running intersection property (RIP) holds, if each $\qm_{\Dt_i}{[G_i]}$ is archimedean,
it is shown in \cite{Kojima09} that $f_k^{spa} \rightarrow f_{\min}$ as $k \rightarrow \infty$.
When $f_k^{spa} = f_{\min}$ for some $k$,
the hierarchy of sparse SOS relaxation \reff{ksos:MPOP}
is said to be {\it tight}. Similarly, if $f_k^{smo} = f_{\min}$ for some $k$,
then the hierarchy of \reff{kmom:MPOP} is {\it tight}.
If they are both {\it tight}, the sparse matrix Moment-SOS hierarchy of
\reff{ksos:MPOP}-\reff{kmom:MPOP} is said to be {\it tight},
or to have {\it finite convergence}.

In the following, we prove a sufficient and necessary condition for
the tightness of the sparse matrix Moment-SOS hierarchy of
\reff{ksos:MPOP}-\reff{kmom:MPOP}.

\begin{theorem}
\label{thm:sepa:match}
Consider the sparse matrix Moment-SOS hierarchy of~\reff{ksos:MPOP}-\reff{kmom:MPOP}.
\begin{enumerate}[(i)]
	
	\item For a relaxation order $k \ge k_0$, it holds
	\be
	\label{eq:f-fmin_in_IQ_2k}
	f-f_{\min} \in \qmod{G}_{spa, 2k}
	\ee
	if and only if there exist sparse polynomials
	$p_i \in \re[\xdt{i}]_{2k}$ such that
	\be
	\label{eq:p+fmin_in_idl}
	\boxed{
		\begin{gathered}
			p_1  + \cdots + p_m  + f_{\min}  =0, \\
			f_i + p_i \in \qm_{\Dt_i}[G_i]_{2k}, \ i=1,\ldots, m.
		\end{gathered}
	}
	\ee
	The equation in the above is equivalent to
	\[
	f - f_{\min}  \, = \, (f_1 + p_1)  + \cdots + (f_m + p_m ).
	\]

	\item When \reff{eq:p+fmin_in_idl} holds for some order $k$,
	the minimum value $f_{\min}$ of (\ref{spar:matPOP})
	is achievable if and only if all sparse polynomials $f_i + p_i$
	have a common zero in $K$, i.e., there exists $u\in K$ such that
	$f_i(\udt{i})+ p_i(\udt{i})=0$ for all $i\in[m]$.
	
\end{enumerate}
\end{theorem}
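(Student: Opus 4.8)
The plan is to treat the two directions of part (i) separately, and then derive part (ii) as a short consequence. For the ``if'' direction, suppose there exist sparse polynomials $p_i \in \re[\xdt{i}]_{2k}$ satisfying \reff{eq:p+fmin_in_idl}. Then summing the membership conditions $f_i + p_i \in \qm_{\Dt_i}[G_i]_{2k}$ over $i = 1, \ldots, m$ gives
\[
\sum_{i=1}^m (f_i + p_i) \in \qm_{\Dt_1}[G_1]_{2k} + \cdots + \qm_{\Dt_m}[G_m]_{2k} = \qmod{G}_{spa, 2k}.
\]
But $\sum_i (f_i + p_i) = \sum_i f_i + \sum_i p_i = f - f_{\min}$ by the linear constraint, which is exactly \reff{eq:f-fmin_in_IQ_2k}. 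The equivalence of the displayed equation with $p_1 + \cdots + p_m + f_{\min} = 0$ is immediate since $f = f_1 + \cdots + f_m$.

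For the ``only if'' direction, assume $f - f_{\min} \in \qmod{G}_{spa, 2k}$. By the definition \reff{eq:qm_spa} of the sparse truncated quadratic module, there exist $\sigma_i \in \qm_{\Dt_i}[G_i]_{2k}$ with $f - f_{\min} = \sigma_1 + \cdots + \sigma_m$, where each $\sigma_i \in \re[\xdt{i}]_{2k}$. The natural choice is $p_i := \sigma_i - f_i$; then $f_i + p_i = \sigma_i \in \qm_{\Dt_i}[G_i]_{2k}$ automatically, and we must check $p_1 + \cdots + p_m + f_{\min} = 0$, i.e. $\sum_i \sigma_i - \sum_i f_i + f_{\min} = (f - f_{\min}) - f + f_{\min} = 0$. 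The one point requiring care is that $p_i$ genuinely lies in $\re[\xdt{i}]_{2k}$: this holds because $\sigma_i \in \re[\xdt{i}]_{2k}$ by construction of the quadratic module, $f_i \in \re[\xdt{i}]$, and $\deg(f_i) \le \deg(f) \le 2k_0 \le 2k$. The main (mild) obstacle here is purely bookkeeping: one must ensure the decomposition of an element of $\qmod{G}_{spa,2k}$ respects both the sparsity pattern and the degree bound simultaneously, but this is built into the definition \reff{eq:qm_spa}, so there is no real difficulty.

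For part (ii), assume \reff{eq:p+fmin_in_idl} holds and set $\sigma_i := f_i + p_i \in \qm_{\Dt_i}[G_i]_{2k}$, so that each $\sigma_i$ is nonnegative on $K_{\Dt_i} \supseteq K$ and $f - f_{\min} = \sigma_1 + \cdots + \sigma_m$ on $\re^n$. For any $u \in K$ we have $f(u) - f_{\min} = \sum_i \sigma_i(\udt{i}) \ge 0$ with each summand nonnegative; hence $f(u) = f_{\min}$ if and only if $\sigma_i(\udt{i}) = 0$ for every $i$, i.e. $f_i(\udt{i}) + p_i(\udt{i}) = 0$ for all $i \in [m]$. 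Therefore $f_{\min}$ is attained at some $u \in K$ precisely when the polynomials $f_i + p_i$ share a common zero in $K$, which is the claimed equivalence. (Note $f_{\min}$ is finite here, since \reff{eq:f-fmin_in_IQ_2k} exhibits $f - f_{\min}$ as nonnegative on $K$ and the feasibility hypothesis $K \ne \emptyset$ is implicit.) No serious obstacle arises in part (ii); the only thing to state carefully is the nonnegativity of each $\sigma_i$ on $K$, which follows from $\qm_{\Dt_i}[G_i] \subseteq \{$polynomials nonnegative on $K_{\Dt_i}\}$ together with $K \subseteq K_{\Dt_i}$.
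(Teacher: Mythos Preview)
Your proof is correct and follows essentially the same approach as the paper. The only cosmetic difference is that for part (i) the paper invokes \cite[Lemma~2.1]{NQZT24} as a black box, whereas you unpack its content directly by setting $p_i \coloneqq \sigma_i - f_i$; this explicit construction is in fact exactly what the paper spells out in the remark immediately following the theorem, so the two arguments coincide.
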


\begin{proof}
(i) Let $\gm = f_{\min}$, $S = \qm[G]_{spa,2k}$, and
$S_i = \qm_{\Dt_i}[G_i]_{2k}$ for each $i=1,\ldots, m$.
Note that $f = f_{1}+\dots+f_m$ and each $f_i\in\re[\xdt{i}]$.
Observe that
\[
S = S_1 + \cdots + S_m, \quad \text{each} \ S_i \subseteq \re[\xdt{i}].
\]
By \cite[Lemma 2.1]{NQZT24},
it holds that $f-\gamma \in S$
if and only if there exist polynomials $p_i \in \re[\xdt{i}]$ such that
\[
p_1  + \cdots + p_m  + \gamma  =0 , \quad f_i + p_i \in S_i \text{ for each } i.
\]

\smallskip
\noindent (ii) Assume \reff{eq:p+fmin_in_idl} holds for some order $k$. \\
($\Rightarrow$): Suppose $f_{\min}$ is achievable for \reff{spar:matPOP},
then there exists a minimizer $u \in K$ such that $f_{\min} = f(u)$. By the assumption that (\ref{eq:p+fmin_in_idl}) holds, we have
\[
f-f_{\min} =
- \Big( f_{\min}+\sum_{i=1}^m p_i \Big) + \sum_{i=1}^m (f_i + p_i)
\in  \qm[G]_{spa,2k}.
\]
Since $p_1 + \dots + p_m +f_{\min} = 0,$  it holds
\[
\sum_{i=1}^m (f_i(\udt{i}) + p_i(\udt{i})) = 0.
\]
Since each $f_i(\udt{i}) + p_i(\udt{i}) \ge 0$ on $K_{\Dt_i}$,
we have
$
f_i(\udt{i}) + p_i(\udt{i}) = 0
$
for all $i \in [m]$. Therefore,
$u \in K$ is a common zero of all $f_i+p_i$.

\smallskip
\noindent
($\Leftarrow$): Suppose $u\in K$ is a common zero of all $f_i+p_i$, then
\[
\sum\limits_{i=1}^{m} f_i(\udt{i})+ p_i(\udt{i}) = f(u) + p_1(\udt{1}) + \cdots + p_m(\udt{m}) = 0.
\]
Since $p_1  + \cdots + p_m  + f_{\min} =0$,
$f(u) = f_{\min}$, so $f_{\min}$ is achievable.
\end{proof}

We remark that when (\ref{eq:f-fmin_in_IQ_2k}) holds, the polynomials $p_1 \ddd p_m$ satisfying \reff{eq:p+fmin_in_idl} can be selected as 
\[
p_i = \sigma_i - f_i, \quad i = 1 \ddd m,
\] 
where polynomials $\sigma_i\in \qm_{\Dt_i}[G_i]_{2k}$ are such that $f - f_{\min} = \sigma_1+ \cdots +\sigma_m$.
Moreover, such $p_i$ can be explicitly given without using $\sig_i$ when (\ref{spar:matPOP}) satisfies some convexity assumptions; see Theorem~\ref{tm:convex}.
The following is an exposition of the above theorem.

\begin{exm}
Let $\Dt_1 = \{1,2\}$ and $\Dt_2 = \{2,3\}$.
Consider the following sparse matrix polynomial optimization problem
($f_1 = x_1, f_2 = -x_3$)
\be\label{eg_ach}
\left\{
\begin{array}{cl}
	\min\limits_{x \in \re^3} & x_1-x_3 \\
	\st & \bbm 0 & x_1- x_2 \\ x_1 - x_2 & x_2^2-x_1^2 \ebm \succeq 0, \quad
	\bbm 0 & x_2- x_3 \\ x_2 - x_3 & x_3^2-x_2^2 \ebm \succeq 0.
\end{array}\right.
\ee
Clearly, the minimum value $f_{\min} = 0$.
Since there exist polynomials
$p_1 = -x_2$ and $p_2 = x_2$ such that $p_1 + p_2 + f_{\min} = 0$,
\[
\begin{gathered}
	x_1 + p_1 = \frac{1}{8} \bbm {x_1+x_2+2} \\ 2 \ebm^T
	\bbm 0 & x_1- x_2 \\ x_1 - x_2 & x_2^2-x_1^2 \ebm
	\bbm {x_1+x_2+2} \\ 2 \ebm, \\
	- x_3 + p_2 = \frac{1}{8} \bbm {x_2+x_3+2} \\ 2 \ebm^T
	\bbm 0 & x_2- x_3 \\ x_2 - x_3 & x_3^2-x_2^2 \ebm
	\bbm {x_2+x_3+2} \\ 2 \ebm,
\end{gathered}
\]
the sparse SOS relaxation is tight for all $k\ge 2$.
\end{exm}

Theorem~\ref{thm:sepa:match} gives a sufficient and necessary condition for the membership $f-f_{\min} \in \qmod{G}_{spa, 2k}$.
When $f-f_{\min} \in \qmod{G}_{spa, 2k}$ holds for some $k\ge k_0$, the sparse matrix Moment-SOS hierarchy \reff{ksos:MPOP}-\reff{kmom:MPOP} is tight, and the sparse SOS relaxation \reff{ksos:MPOP} achieves its optimal value $f_k^{spa}$.
However, it is possible that the optimal value of \reff{ksos:MPOP} is not achievable, while we still have $f_k^{spa} = f_{\min}$ for some $k\ge k_0$.
This can be shown in the following example.

\begin{exm}\label{nach}
Let $\Dt_1 = \{1,2\}$ and $\Dt_2 = \{2,3\}$.
Consider the following sparse matrix polynomial optimization problem
($f_1=x_1, f_2=-x_3$)
\be\label{eg_nach}
\left\{
\begin{array}{cl}
	\min\limits_{x \in \re^3} &x_1 -x_3 \\
	\st & G_i(\xdt{i}) \succeq 0 , \ i = 1, 2.
\end{array}\right.
\ee
In the above, each $G_i$ is given as follows:
\[
G_i(\xdt{i}) = \bbm 0 & x_i^2+ x_{i+1}^2 \\ x_i^2+ x_{i+1}^2 & x_i^2+ x_{i+1}^2 \ebm.
\]
Clearly, the minimum value $f_{\min} = 0$ and the minimizer $x^* = 0$.
We claim that $f_k^{spa} = 0$ for all $k\ge 1$, since for all $\eps > 0$,
\begin{align*}
	f + \eps &= \frac{\eps}{4} \left[
	\Big(1 + \frac{2x_1}{\eps}\Big)^2+ \Big(1 - \frac{2x_2}{\eps}\Big)^2 + \Big(1 + \frac{2x_2}{\eps}\Big)^2 + \Big(1 - \frac{2x_3}{\eps}\Big)^2 \right] \\
	&+ \frac{1}{\eps} \bbm -1 \\ 1 \ebm^T
	\Big( \bbm 0 & x_1^2+ x_{2}^2 \\ x_1^2+ x_{2}^2 & x_1^2+ x_{2}^2 \ebm
	+ \bbm 0 & x_2^2+ x_{3}^2 \\ x_2^2+ x_{3}^2 & x_2^2+ x_{3}^2 \ebm \Big)
	\bbm -1 \\ 1 \ebm.
\end{align*}
This means that the sparse SOS relaxation is tight.
However, its optimal value is not achievable.
Suppose otherwise that $\gm = 0$ is feasible for the SOS relaxation.
Then,
\[
x_1 - x_3 - 0 = \sum_{i=1}^{2}
\Big[ \sig_i (\xdt{i})+ \sum_{j=1}^{s_i} P_{ij}(\xdt{i})^T G_i(\xdt{i}) P_{ij}(\xdt{i}) \Big],
\]
for some $\sig_i \in \Sig[\xdt{i}]$ and $P_{ij} \in \re[\xdt{i}]^2$.
Let $x_1=x_2=t$ and $x_3=-t$, then we get
\[
2t = \hat{\sig}(t) +  v(t)^T \bbm 0 & 2t^2 \\ 2t^2 & 2t^2 \ebm v(t), \quad \hat{\sig} \in \Sig[t], v \in \re[t]^2.
\]
Plugging in $t=0$, the above implies $\hat{\sig}(0) = 0$. So, $\hat{\sig} = t^2 \cdot \sig_1$ for another SOS polynomial $\sig_1$.
Then, we have
\[
2t = t^2 \cdot \sig_1 + t^2 \cdot v(t)^T \bbm 0 & 2 \\ 2 & 2 \ebm v(t).
\]
However, this is a contradiction because $0$ is a simple root of the left hand side but it is a multiple root of the right hand side.
Therefore, the condition (\ref{eq:p+fmin_in_idl}) does not hold and the optimal value of the SOS relaxation is not achievable.
\end{exm}

The following theorem characterizes the tightness $f_k^{spa} = f_{\min}$ when the optimal value of the sparse SOS relaxation \reff{ksos:MPOP} is not achievable.
\begin{theorem}  \label{thm:sepa:notattain}
The $k$th order sparse SOS relaxation \reff{ksos:MPOP} is tight (i.e., $f_{\min} = f^{spa}_k$)
if and only if for every $\epsilon>0$,
there exist sparse polynomials $p_i \in \re[\xdt{i}]_{2k}$ such that
\be  \label{eq:sep_pi_eps}
\boxed{
	\begin{gathered}
		p_1  + \cdots + p_m  + f_{\min}  =0, \\
		f_i + p_i + \eps \in \qm_{\Dt_i}[G_i]_{2k}, \ i=1,\ldots, m.
	\end{gathered}
}
\ee
\end{theorem}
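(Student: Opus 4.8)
The plan is to reduce the statement, via two elementary observations, to \cite[Lemma 2.1]{NQZT24} applied (exactly as in the proof of Theorem~\ref{thm:sepa:match}(i)) to the perturbed constant $f_{\min}-\eps$ rather than to $f_{\min}$ itself. First I would record that $f_k^{spa}\le f_{\min}$ always holds: if $f-\gamma\in\qmod{G}_{spa,2k}\subseteq\qm[G]_{spa}$, then $f-\gamma\ge 0$ on $K$, hence $\gamma\le f_{\min}$. Moreover, since each $\qm_{\Dt_i}[G_i]_{2k}$ contains every nonnegative constant (these lie in $\Sig[\xdt{i}]_{2k}$), the set $\{\gamma:f-\gamma\in\qmod{G}_{spa,2k}\}$ is downward closed. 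Combining the two, $f_k^{spa}=f_{\min}$ holds if and only if $\gamma=f_{\min}-\eps$ is feasible for \reff{ksos:MPOP} for every $\eps>0$, i.e.
\[
f-(f_{\min}-\eps)\in\qmod{G}_{spa,2k}\quad\text{for every }\eps>0 .
\]
This reformulation is the crux: the possible non-attainment of the supremum defining $f_k^{spa}$ is exactly what forces the ``for every $\eps$'' quantifier instead of a single membership $f-f_{\min}\in\qmod{G}_{spa,2k}$.

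Next, for a fixed $\eps>0$ I would invoke \cite[Lemma 2.1]{NQZT24} with $S=\qmod{G}_{spa,2k}=S_1+\cdots+S_m$, $S_i=\qm_{\Dt_i}[G_i]_{2k}\subseteq\re[\xdt{i}]$, and constant $\gamma=f_{\min}-\eps$: the membership $f-(f_{\min}-\eps)\in\qmod{G}_{spa,2k}$ holds if and only if there exist $q_i\in\re[\xdt{i}]$ with $q_1+\cdots+q_m+(f_{\min}-\eps)=0$ and $f_i+q_i\in\qm_{\Dt_i}[G_i]_{2k}$ for all $i$. The degree bound $q_i\in\re[\xdt{i}]_{2k}$ is automatic, since $q_i=(f_i+q_i)-f_i$ with both summands of degree at most $2k$ (here $\deg f_i\le 2k_0\le 2k$).

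Finally I would translate between this decomposition and the boxed form \reff{eq:sep_pi_eps}, in which the $\eps$-shift sits inside the quadratic-module membership rather than in the linear identity. Given the $q_i$ above, set $p_i\coloneqq q_i-\eps/m$; then $p_1+\cdots+p_m+f_{\min}=(q_1+\cdots+q_m)+(f_{\min}-\eps)=0$, and $f_i+p_i+\eps=(f_i+q_i)+\frac{m-1}{m}\eps\in\qm_{\Dt_i}[G_i]_{2k}$ because the added term is a nonnegative constant. Conversely, if $p_i\in\re[\xdt{i}]_{2k}$ satisfy \reff{eq:sep_pi_eps}, then $\sum_{i=1}^m(f_i+p_i+\eps)=f-f_{\min}+m\eps\in\qmod{G}_{spa,2k}$; as this holds for every $\eps>0$, rescaling $\eps\mapsto\eps/m$ yields $f-(f_{\min}-\eps)\in\qmod{G}_{spa,2k}$ for all $\eps>0$, which by the first step gives $f_k^{spa}=f_{\min}$.

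The only nonroutine point is the first step: one must handle the possible non-attainment of the supremum in \reff{ksos:MPOP} and verify the downward-closedness needed to convert ``$f_k^{spa}=f_{\min}$'' into the clean $\eps$-family of memberships. The remaining work is a direct reuse of \cite[Lemma 2.1]{NQZT24} together with the $\eps/m$ shift, all of which is bookkeeping.
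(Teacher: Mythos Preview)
Your proposal is correct and follows essentially the same route as the paper: both directions reduce to \cite[Lemma~2.1]{NQZT24} applied to an $\eps$-perturbed membership, together with the trivial bound $f_k^{spa}\le f_{\min}$. The only cosmetic difference is where the $\eps$ is parked before invoking the lemma: the paper absorbs it into the objective pieces by setting $f_i^\eps\coloneqq f_i+\eps$ and applying the lemma with constant $f_{\min}$, so the resulting $p_i$ already satisfy $p_1+\cdots+p_m+f_{\min}=0$ and $f_i+p_i+\eps\in\qm_{\Dt_i}[G_i]_{2k}$ directly; you instead put the $\eps$ into the constant $\gamma=f_{\min}-\eps$, obtain $q_i$, and then shift by $\eps/m$ afterward. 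Both are equally valid bookkeeping choices.
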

\begin{proof}
($\Leftarrow$): Suppose that for all $\eps >0$, \reff{eq:sep_pi_eps} holds for some polynomials
$p_i \in  \re[\xdt{i}]_{2k}$. Then
\[
f - (f_{\min} - m\eps) =
-\Big[ f_{\min} + \sum_{i=1}^{m}p_i \Big] + \sum_{i=1}^{m} (f_i+p_i+\eps)
\in  \qm[G]_{spa, 2k}.
\]
This means that $\gm = f_{\min} - m\eps$ is feasible for the $k$th order sparse SOS relaxation
\reff{ksos:MPOP}, so $f_{k}^{spa} \ge f_{\min}-m\eps$.
Since $\eps > 0$ is arbitrary, we get $f_{k}^{spa} \ge f_{\min}$.
On the other hand, we always have $f_{k}^{spa} \le f_{\min}$,
so $f_{k}^{spa} = f_{\min}$.

\medskip \noindent
($\Rightarrow$):
Suppose the relaxation \reff{ksos:MPOP} is tight.
Then, for arbitrary $\eps > 0$, it holds
\[f - (f_{\min} - m\eps) \in \qmod{G}_{spa, 2k}.\]
For each $i$, let
$
f^{\epsilon}_i(\xdt{i}) \, \coloneqq \,  f_i(\xdt{i})+\eps,
$
then
\[
f - (f_{\min} - m\epsilon) = \Big(\sum_{i=1}^mf^{\epsilon}_i \Big) -f_{\min} \in \qmod{G}_{spa,2k}.
\]
Applying \cite[Lemma 2.1]{NQZT24},
we let $\gm \coloneqq f_{\min}$, $f_i = f_i^{\eps}$, $S = \qm[G]_{spa,2k}$, and
$S_i = \qm_{\Dt_i}[G_i]_{2k}$ for each $i=1,\ldots, m$.
So there exist polynomials $p_i  \in \re[\xdt{i}]_{2k}$
such that $p_1 + \cdots + p_m + f_{\min} =0$ and for all $i$,
\[
f_i^{\eps} + p_i = f_i + p_i + \eps \in \qm_{\Dt_{i}}[G_i]_{2k}  .
\]
So, \reff{eq:sep_pi_eps} holds.
\end{proof}

We remark that for problem~(\ref{eg_nach}) in Example~\ref{nach}, the sparse matrix SOS relaxation \reff{ksos:MPOP} is tight for all $k\ge 1$.
This is because for $p_1 = -x_2$ and $p_2 = x_2$, we have $p_1 + p_2 + f_{\min} = 0$, and
\[
f_1 + p_1 + \eps = \frac{\eps}{2} \left[
\Big(1 + \frac{x_1}{\eps}\Big)^2+ \Big(1 - \frac{x_2}{\eps}\Big)^2 \right] +
\frac{1}{2\eps} \bbm -1 \\ 1 \ebm^T
\bbm 0 & x_1^2+ x_{2}^2 \\ x_1^2+ x_{2}^2 & x_1^2+ x_{2}^2 \ebm
\bbm -1 \\ 1 \ebm,
\]
\[
f_2 + p_2 + \eps =  \frac{\eps}{2} \left[
\Big(1 + \frac{x_2}{\eps}\Big)^2+ \Big(1 - \frac{x_3}{\eps}\Big)^2 \right] +
\frac{1}{2\eps} \bbm -1 \\ 1 \ebm^T
\bbm 0 & x_2^2+ x_{3}^2 \\ x_2^2+ x_{3}^2 & x_2^2+ x_{3}^2 \ebm
\bbm -1 \\ 1 \ebm.
\]

\section{Detecting tightness and extracting minimizers}
\label{sc:flat}

Theorems~\ref{thm:sepa:match} and \ref{thm:sepa:notattain} characterize tightness of the sparse matrix Moment-SOS  hierarchy of (\ref{ksos:MPOP})-(\ref{kmom:MPOP}).
In this section, we discuss how to detect the tightness $f^{smo}_k = f_{\min}$
and get minimizers of (\ref{spar:matPOP}).

Let $y^*$ be a minimizer of the sparse moment relaxation (\ref{kmom:MPOP}).
We say that $y^*$ satisfies the {\it flat truncation} condition (see \cite{nie2023moment}) if there exists an integer $t\in [k_0,k]$ such that for all $i=1\ddd m$,
it holds (let $d_i\coloneqq\lceil \deg(G_i)/2\rceil$)
\be\label{eq:ft}
r_i\coloneqq \rank\, M^{(t)}_{\Dt_i}[\ydt{i}^*] = \rank\, M^{(t-d_i)}_{\Dt_i}[\ydt{i}^*].
\ee
We refer to \reff{momentmx} and \reff{yDti} for the notation $\ydt{i}^*$ and the moment matrix $M^{(t)}_{\Dt_i}[\ydt{i}^*]$.
When (\ref{eq:ft}) holds, there exist support sets
\be\label{chi}
\mc{X}_{\Dt_i}\coloneqq \{u^{(i,1)}\ddd u^{(i,r_i)}  \}\subseteq K_{\Dt_i}
\ee
and scalars $\lmd_{i,1}\ddd \lmd_{i,r_i}$ such that for each $i$, it holds
\be \label{decomp_i}
\boxed{
\begin{gathered}
	{y^*_{\Dt_{i}} |_{2t} = \lambda_{i,1}[u^{(i,1)}]_{2t} + \cdots + \lambda_{i,r_i}[u^{(i,r_i)}]_{2t},} \\
	\lambda_{i,1} > 0, \ldots, \lambda_{i,r_i} > 0, \quad
	\lambda_{i,1} + \cdots + \lambda_{i,r_i} = 1.
\end{gathered}
}
\ee
In the above, $y_{\Dt_i}|_{2t}$ denotes the degree-$2t$ truncation of $\ydt{i}$:
\[
y_{\Dt_i}|_{2t} \, \coloneqq \, (y_{\af})_{ \af \in \mathbb{N}_{2t}^{\Dt_i} }.
\]
This is shown in \cite[Theorem~2.4]{henrion2006convergent}.
A numerical method for computing points $u^{(i,1)}\ddd u^{(i,r_i)}$ is introduced in \cite{henrion2005detecting} (also see \cite{nie2023moment}).
Furthermore, if $x^*$ satisfies $\xdt{i}^*\in\mc{X}_{\Dt_i}$ for every $i$, then $f(x^*) = f_{k}^{smo}$ implies that $f_{k}^{smo} = f_{\min}$ and $x^*$ is a minimizer of (\ref{spar:matPOP}).
This is because if $\xdt{i}^*\in\mc{X}_{\Dt_i}$ for each $i$, then $x^*$ must be feasible for (\ref{spar:matPOP}), thus $f(x^*)\ge f_{\min}$, while $f_{k}^{smo}$ is an lower bound for $f_{\min}$.

In the following, we show that if (\ref{eq:ft}) holds with $f^{smo}_t = f^{smo}_k $, then $f(x^*) = f_{k}^{smo}= f_{\min}$ holds for all $x^*$ such that each $\xdt{i}^*\in\mc{X}_{\Dt_i}$.

\begin{theorem}\label{tm:getminimizer}
Let $y^*$ be a minimizer of (\ref{kmom:MPOP}).
Suppose there exists an integer $t\in [k_0,k]$ such that the flat truncation condition (\ref{eq:ft}) holds and $f^{smo}_t = f^{smo}_k$.
\begin{enumerate}
	\item[(i)] If $x^*$ satisfies $\xdt{i}^*\in \mc{X}_{\Dt_i}$ for all $i=1\ddd m$,
	then $f_{\min} = f^{smo}_k$ and $x^*$ is a minimizer of (\ref{spar:matPOP}).
	
	\item[(ii)] In item (i), if, in addition, there is no duality gap between \reff{ksos:MPOP} and \reff{kmom:MPOP}, then $f_{\min} = f_k^{spa}$.
\end{enumerate}
\end{theorem}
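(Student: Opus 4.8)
The plan is to reduce everything to the flat-truncation machinery already set up before the statement, together with the hypothesis $f^{smo}_t = f^{smo}_k$. First I would observe that, since $t \in [k_0,k]$, the minimizer $y^*$ of \reff{kmom:MPOP} can be truncated to degree $2t$, and by the flat truncation condition \reff{eq:ft} each truncated subvector $y^*_{\Dt_i}|_{2t}$ admits the atomic decomposition \reff{decomp_i} with support in $\mc{X}_{\Dt_i} \subseteq K_{\Dt_i}$. The key point is that the value $\langle f, y^* \rangle = f^{smo}_k$ equals $f^{smo}_t$ by hypothesis, and $\langle f, y^* \rangle$ only depends on the degree-$2t$ truncations $y^*_{\Dt_i}|_{2t}$ (because $\deg f_i \le 2k_0 \le 2t$). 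Hence $f^{smo}_t = \sum_i \langle f_i, y^*_{\Dt_i}|_{2t}\rangle = \sum_i \sum_{j} \lambda_{i,j} f_i(u^{(i,j)})$, a convex combination of values $f_i$ attains on $K_{\Dt_i}$.

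For part (i), take any $x^*$ with $\xdt{i}^* \in \mc{X}_{\Dt_i}$ for every $i$. Then $x^*$ is feasible for \reff{spar:matPOP}: for each $i$ the relevant constraint $G_i(\xdt{i}^*) \succeq 0$ holds because $u^{(i,\cdot)} \in K_{\Dt_i}$ and $K_{\Dt_i}$ is exactly the set where $G_i \succeq 0$ (here I use the running-intersection-type compatibility so that the overlapping coordinates of the chosen atoms agree with $x^*$). Therefore $f(x^*) \ge f_{\min}$. On the other hand I want to show $f(x^*) = f^{smo}_k$. Since $f^{smo}_k$ is a lower bound for $f_{\min}$, it suffices to show $f(x^*) \le f^{smo}_k$; equivalently, $f(x^*) = \sum_i f_i(\xdt{i}^*)$ should not exceed the convex combination $\sum_i \sum_j \lambda_{i,j} f_i(u^{(i,j)}) = f^{smo}_t = f^{smo}_k$. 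The cleanest way is: each $u^{(i,j)} \in \mc{X}_{\Dt_i} \subseteq K_{\Dt_i}$, so $f_i(u^{(i,j)}) \ge \min_{K_{\Dt_i}} f_i$; but more to the point, $x^*$ with $\xdt i^* \in \mc X_{\Dt_i}$ is itself feasible, so $f(x^*) \ge f_{\min} \ge f^{smo}_k = f^{smo}_t = \sum_i\sum_j \lambda_{i,j}f_i(u^{(i,j)})$. Combined with the fact that the $f_i$ restricted to the atomic support are constant on $\mc X_{\Dt_i}$ up to the weighting — i.e. all $u^{(i,j)}$ give the same value of $f_i$, which follows because otherwise replacing $y^*$ by the Dirac measure at the best atom in each block would strictly lower $\langle f,y^*\rangle$, contradicting optimality of $y^*$ — we get $f_i(\xdt i^*) = \sum_j \lambda_{i,j} f_i(u^{(i,j)})$ for each $i$, hence $f(x^*) = f^{smo}_t = f^{smo}_k$. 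Together with $f(x^*) \ge f_{\min} \ge f^{smo}_k$ this forces $f_{\min} = f^{smo}_k = f(x^*)$, so $x^*$ is a minimizer.

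For part (ii), I simply invoke weak and strong duality: in general $f^{spa}_k \le f^{smo}_k$ by weak duality between \reff{ksos:MPOP} and \reff{kmom:MPOP}, and $f^{spa}_k \le f_{\min}$ always. Under the stated no-duality-gap hypothesis $f^{spa}_k = f^{smo}_k$, and part (i) gives $f^{smo}_k = f_{\min}$, so $f^{spa}_k = f_{\min}$.

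The main obstacle I anticipate is the argument that all atoms $u^{(i,j)}$ in a given block yield the same value $f_i(u^{(i,j)})$, and the bookkeeping that the chosen $x^*$ with $\xdt i^* \in \mc X_{\Dt_i}$ for all $i$ is genuinely feasible and consistent across overlapping variable blocks — this is where the running intersection property (or whatever compatibility of the $\Dt_i$ the paper assumes) enters, and it must be spelled out carefully so that the pieced-together point $x^*$ is well-defined and lies in $K$. The optimality-of-$y^*$ argument that collapses each block to a single value is the delicate step: it requires noting that one may perturb $y^*$ block-by-block (replacing the measure on block $i$ by a Dirac at its minimizing atom while keeping the other blocks fixed) without leaving the feasible set of \reff{kmom:MPOP}, which again uses that the blocks interact only through shared lower-degree moments that are preserved under such a replacement when the flat truncation holds.
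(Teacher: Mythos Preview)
Your argument for part (ii) is fine and matches the paper. The problem is in part (i): the intermediate claim that ``all $u^{(i,j)}$ give the same value of $f_i$'' is false in general, and your justification by block-by-block replacement is invalid. Replacing the measure on block $i$ by a Dirac at one of its atoms changes the moments $(y_\alpha)$ for $\alpha$ supported in $\Delta_i\cap\Delta_j$, so the resulting vector is no longer a consistent element of $\re^{\U_k}$ and need not be feasible for \reff{kmom:MPOP}. Concretely, take $m=2$, $\Delta_1=\{1,2\}$, $\Delta_2=\{2,3\}$, $f_1=x_1+x_2$, $f_2=-x_2+x_3$ on $K=[-1,1]^3$, and let $y^*$ be the moment sequence of the uniform measure on $\{(-1,-1,-1),(-1,1,-1)\}$. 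Then $y^*$ is optimal, flat truncation holds, $\mc{X}_{\Delta_1}=\{(-1,-1),(-1,1)\}$, but $f_1(-1,-1)=-2\neq 0=f_1(-1,1)$. Your concluding remark about ``shared lower-degree moments being preserved'' is exactly where the argument breaks: they are not preserved.

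The paper avoids this by never claiming constancy of $f_i$ on $\mc{X}_{\Delta_i}$. Instead it uses that the hypothesised \emph{global} point $x^*$ is consistent across all blocks. Since $x^*_{\Delta_i}\in\mc{X}_{\Delta_i}$ for every $i$, one can choose a single scalar $\rho>0$ (the minimum over $i$ of the weight of $x^*_{\Delta_i}$ in \reff{decomp_i}) and write $y^*|_{2t}=\rho\,\hat y|_{2t}+(1-\rho)\,\tilde y|_{2t}$, where $\hat y$ is the moment vector of the Dirac at $x^*$ and $\tilde y=(y^*-\rho\hat y)/(1-\rho)$. Because the same $\rho$ is subtracted in every block simultaneously, the overlap moments remain consistent, and one checks directly that both $\hat y|_{2t}$ and $\tilde y|_{2t}$ are feasible for the order-$t$ relaxation. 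Since $\langle f,y^*\rangle=f^{smo}_k=f^{smo}_t$ is the optimal value at order $t$ and is a convex combination of $\langle f,\hat y\rangle$ and $\langle f,\tilde y\rangle$, both of which are at least $f^{smo}_t$, equality is forced: $f(x^*)=\langle f,\hat y\rangle=f^{smo}_t=f^{smo}_k$. Feasibility of $x^*$ (which is immediate from $x^*_{\Delta_i}\in K_{\Delta_i}$, no RIP needed) then gives $f_{\min}=f^{smo}_k$. The key idea you are missing is to peel off the Dirac at the \emph{global} $x^*$ with a uniform weight, rather than attempting independent per-block replacements.
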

\begin{proof}
Since $y^*$ satisfies the flat truncation condition (\ref{eq:ft}), the decomposition (\ref{decomp_i}) holds for all $i=1\ddd m$.
So, there exist positive scalars
$\rho_1\ddd \rho_m$ and moment matrices $W_{\Dt_i}\succeq 0$ such that for each $i=1\ddd m$,
\[
M_{\Dt_i}^{(t)}[\ydt{i}^*] = \rho_i[\xdt{i}^*]_t[\xdt{i}^*]_t^T + W_{\Dt_i}.
\]
Let $\rho\coloneqq \min\{\rho_1\ddd \rho_m\}$, and let $\hat{y}\in\re^{\mathbb{U}_k}$ be the tms such that $\hat{y}_{\alpha} = (x^*)^{\alpha}$ for all $\alpha \in \mathbb{U}_k$.
The subvector $\hat{y}|_{2t}$ is feasible for (\ref{kmom:MPOP})
with the relaxation order equal to $t$, since every $\xdt{i}^*\in\mc{X}_{\Dt_i}\subseteq K_{\Dt_i}$.

For the case that $\rho = 1$, it is clear that every $\mc{X}_{\Dt_i} = \{\xdt{i}^*\}$,
and the conclusion follows directly.
In the following, we consider the case that $\rho<1$. Let
\[
\tilde{y}\coloneqq (y^*-\rho\hat{y})/(1-\rho).
\]
Then, for each $i$, it holds
\[ L_{G_i}^{(t)}[\tilde{y}] = \frac{1}{1-\rho}(L_{G_i}^{(t)}[y^*] - \rho L_{G_i}^{(t)}[\hat{y}]) \succeq 0,\]
by (\ref{decomp_i}) and the fact that $\rho \le \rho_i\le \lmd_{i,\hat{j}}$, where $\hat{j}$
is the label such that $\xdt{i}^* = u^{(i,\hat{j})}$.
Similarly, one can show that $M_{\Dt_i}^{(t)}[\tilde{y}]\succeq 0$.
So, $\tilde{y}|_{2t}$ is also feasible for (\ref{kmom:MPOP}) with the relaxation order equal to $t$.
Therefore, by the assumption that $f^{smo}_t = f^{smo}_k$, we have
\[
f^{smo}_t = \lip f,y^* \rip = \lip f,\hat{y}\rip = f(x^*).
\]
This completes the proof.
\end{proof}

By Theorem~\ref{tm:getminimizer}, once we get a minimizer $y^*$ for (\ref{kmom:MPOP}),
we may check whether the moment relaxation is tight and extract minimizers by checking (\ref{eq:ft}).
Moreover, since we usually solve (\ref{kmom:MPOP}) with an increasing relaxation order $k$,
we can use the optimal value of moment relaxations
with lower relaxation orders to check if $f^{smo}_t = f^{smo}_k$ holds or not.

Summarizing the above, we get the following algorithm
for solving the sparse matrix polynomial optimization problem (\ref{spar:matPOP}).

\begin{alg} \label{alg:getminimizer}
For (\ref{spar:matPOP}),
let $k_0$ be as in (\ref{eq:k0}) and $k\coloneqq k_0$.
Let
$
d \coloneqq \max \{d_1 \ddd d_m\}.
$
Do the following:
\begin{description}

\item [Step~1:] Solve the sparse moment relaxation (\ref{kmom:MPOP}) of order $k$
for a minimizer $y^*$. Let $t \coloneqq d$.

\bigskip
\item [Step~2:] For each $i=1\ddd m$, check whether the flat truncation condition \reff{eq:ft} holds or not.
If it holds for all $i$, extract the points $u^{(i,1)}\ddd u^{(i,r_i)}$ satisfying (\ref{decomp_i})
and go to Step~4.

\bigskip
\item [Step~3:] If \reff{eq:ft} does not hold for some $i$, update $t\coloneqq t+1$.
If $t \le k$, go to Step~2;
if $t>k$, let $k \coloneqq k+1$ and go to Step~1.

\bigskip
\item [Step~4:] Let $\mc{X}_{\Dt_i}$ be as in \reff{chi} and formulate the set
\be
\mc{X}\coloneqq \{ x\in\re^n: \xdt{i}\in \mc{X}_{\Dt_i},\, i=1\ddd m \}.
\ee
If $\mc{X} \ne \emptyset$ and $f^{smo}_t = f^{smo}_k$, output that $\mc{X}$
is the set of minimizers for (\ref{spar:matPOP}) and stop;
otherwise, let $k \coloneqq k+1$ and go to Step~1.

\end{description}

\end{alg}

\begin{remark}\label{rmk:ftrank}
In Step~2 of Algorithm~\ref{alg:getminimizer},
we need to evaluate the ranks of $M^{(t)}_{\Dt_i}[\ydt{i}^*]$ and $M^{(t-d_i)}_{\Dt_i}[\ydt{i}^*]$ to check the flat truncation condition (\ref{eq:ft}).
Evaluating matrix ranks is a classical problem in numerical linear algebra.
It may be difficult to determine the rank accurately when a matrix is near to be singular, due to round-off errors.
In computational practice, the rank of moment matrices is often determined as follows.
Suppose $\mu_1 \ge \mu_2\ge \cdots \ge \mu_{s}>0$ are the positive eigenvalues of the psd matrix $M^{(t)}_{\Dt_i}[\ydt{i}^*]$.
If there exists a smallest $r<s$ such that $\mu_{r+1} < \varepsilon \mu_r$ for some given tolerance $\varepsilon$ (typically, $\varepsilon \coloneqq 10^{-3}$), then $r$ is evaluated as the rank of $M^{(t)}_{\Dt_i}[\ydt{i}^*]$; otherwise, its rank is evaluated as $s$.
Typically, there exists a significant gap between $\mu_r$ and $\mu_{r+1}$ when flat truncation holds; see Examples~\ref{ex:nonconvex_hol} and \ref{ex:from_henrion}.
The rank of $M^{(t-d_i)}_{\Dt_i}[\ydt{i}^*]$ can be evaluated in a similar way.
Moreover, due to the existence of round-off errors, we also numerically check if the extracted minimizers are feasible or not, and whether the computed lower bound $f^{smo}_k$ is attained at these points, up to an apriori given tolerance (say, $10^{-3}$).
This method of checking the flat truncation condition and extracting minimizers is implemented in the software {\tt GloptiPoly 3} \cite{GloPol3}, {\tt YALMIP} \cite{yalmip}, and {\tt TSSOS} \cite{TSSOS}.
Moreover, there exists work on the robustness of extracting minimizers using flat truncation; see \cite{Klep2018}. 
\end{remark}

\begin{exm}
Let $\Dt_1 = \{1,2\}$ and $\Dt_2 = \{2,3\}$.
Consider the sparse matrix polynomial optimization problem
($f_1 \coloneqq -x_1-4x_2^2,\ f_2 \coloneqq -x_3$)
\be\label{eg_flat}
\left\{
\begin{array}{cl}
	\min\limits_{x \in \re^3} & -x_1-4x_2^2-x_3 \\
	\st & \bbm 1+x_1 & x_2^2  \\
	x_2^2 & 1-x_1  \ebm \succeq 0, \quad
	\bbm 1+x_3 & x_2^2  \\
	x_2^2 & 1-x_3  \ebm \succeq 0.
\end{array}
\right.
\ee
The sparse moment relaxation \reff{kmom:MPOP} can be implemented in \verb|YALMIP|.
For $k=3$, the relaxation \reff{kmom:MPOP} is tight, and we get $f_3^{smo} = f_{\min} = -\frac{10}{\sqrt{5}}$.
By solving \reff{kmom:MPOP}, we get
\[{\small
	M^{(3)}_{\Dt_1} [\ydt{1}^*] =
	\bbm
	1 &  \frac{1}{\sqrt{5}}  &  0  &  \frac{1}{5}  &  0  &  \frac{2}{\sqrt{5}}  &  5^{-\frac{3}{2}}  &  0  & \frac{2}{5}  &  0 \\
	\frac{1}{\sqrt{5}}   &  \frac{1}{5}   &  0  &   5^{-\frac{3}{2}}  &  0  &  \frac{2}{5}  &  \frac{1}{25}  &  0  & \frac{2}{5\sqrt{5}}  &  0 \\
	0  &  0 &   \frac{2}{\sqrt{5}}  &  0  &  \frac{2}{5} &   0 &   0  &  \frac{2}{5\sqrt{5}}  &  0 & \frac{4}{5} \\
	\frac{1}{5}  &   5^{-\frac{3}{2}}  &  0  &   \frac{1}{25}  &  0  &  \frac{2}{5\sqrt{5}}  &  5^{-\frac{5}{2}}  &  0 &   \frac{2}{25}  & 0 \\
	0  &  0  &  \frac{2}{5}  &  0 &   \frac{2}{5\sqrt{5}}  &  0  &  0  &  \frac{2}{25}  &  0 &  \frac{4}{5\sqrt{5}} \\
	\frac{2}{\sqrt{5}}  &  \frac{2}{5}  &  0  &  \frac{2}{5\sqrt{5}}  &  0  &  \frac{4}{5}  &  \frac{2}{25}  &  0  &  \frac{4}{5\sqrt{5}} &  0 \\
	5^{-\frac{3}{2}}  &   \frac{1}{25}  &  0  &  5^{-\frac{5}{2}}  &  0  &  \frac{2}{25}  &  0.6917  &  0  &  \frac{2}{25\sqrt{5}} &  0 \\
	0  &  0  &  \frac{2}{5\sqrt{5}}  &  0  &  \frac{2}{25} &  0  &  0  &  \frac{2}{25\sqrt{5}}  &  0 &  \frac{4}{25} \\
	\frac{2}{5}  &  \frac{2}{5\sqrt{5}} &   0  &  \frac{2}{25} &   0  &  \frac{4}{5\sqrt{5}} &   \frac{2}{25\sqrt{5}} &   0  &  \frac{4}{25}  & 0 \\
	0  &  0  &  \frac{4}{5}  &  0  &  \frac{4}{5\sqrt{5}}  &  0  &  0  &  \frac{4}{25}  &  0 & \frac{8}{5\sqrt{5}} \\
	\ebm,}
\]
\[{\small
	M^{(3)}_{\Dt_2} [\ydt{2}^*] =
	\bbm
	1  &  0  &  \frac{1}{\sqrt{5}}   &  \frac{2}{\sqrt{5}}  &  0  &  \frac{1}{5}  &  0  &  \frac{2}{5}  &  0  &  5^{-\frac{3}{2}} \\
	0  &  \frac{2}{\sqrt{5}}  &  0  &  0  &  \frac{2}{5}  &  0  &   \frac{4}{5}  &  0  &   \frac{2}{5\sqrt{5}}  & 0 \\
	\frac{1}{\sqrt{5}}  &   0  &  \frac{1}{5}  &  \frac{2}{5}  &  0  &  5^{-\frac{3}{2}}  &  0  &   \frac{2}{5\sqrt{5}}  &  0 &  \frac{1}{25} \\
	\frac{2}{\sqrt{5}}  &  0  &  \frac{2}{5}  &   \frac{4}{5}  &  0  &   \frac{2}{5\sqrt{5}}  &  0  &  \frac{4}{5\sqrt{5}} &   0 &   \frac{2}{25} \\
	0  &  \frac{2}{5}  &  0  &  0  &   \frac{2}{5\sqrt{5}}  &  0  &  \frac{4}{5\sqrt{5}}  &  0  &   \frac{2}{25}  & 0 \\
	\frac{1}{5}  &  0  &  5^{-\frac{3}{2}}  &   \frac{2}{5\sqrt{5}}   & 0  &  \frac{1}{25}  &  0  &   \frac{2}{25}  &  0 &  5^{-\frac{5}{2}} \\
	0  &   \frac{4}{5}  &  0  &  0   & \frac{4}{5\sqrt{5}}  &  0  &  \frac{8}{5\sqrt{5}}  &  0  &  \frac{4}{25}  &  0 \\
	\frac{2}{5}  &  0  &   \frac{2}{5\sqrt{5}}  &  \frac{4}{5\sqrt{5}}  &  0  &   \frac{2}{25}  &  0  &  \frac{4}{25}  &  0  &  \frac{2}{25\sqrt{5}} \\
	0  &   \frac{2}{5\sqrt{5}}  &  0  &  0  &   \frac{2}{25}  &  0  &  \frac{4}{25}  &  0  &  \frac{2}{25\sqrt{5}}  &  0 \\
	5^{-\frac{3}{2}}  &  0  &  \frac{1}{25}  &   \frac{2}{25}  &  0  &  5^{-\frac{5}{2}}  &  0  &  \frac{2}{25\sqrt{5}}  &  0  & 0.6917
	\ebm.}
\]
The flat truncation \reff{eq:ft} holds for $t=2$ (but not for $t=3$) since
\[
\begin{gathered}
	\rank \, M^{(3)}_{\Dt_1}[\ydt{1}^*] = \rank \, M^{(3)}_{\Dt_2}[\ydt{2}^*] = 3, \\
	\rank \, M^{(1)}_{\Dt_1}[\ydt{1}^*] = \rank \, M^{(2)}_{\Dt_1}[\ydt{1}^*] = 2, \\
	\rank \, M^{(1)}_{\Dt_2}[\ydt{2}^*] = \rank \, M^{(2)}_{\Dt_2}[\ydt{2}^*] = 2.
\end{gathered}
\]
By Theorem~\ref{tm:getminimizer}, we get two minimizers
$(\frac{1}{\sqrt{5}}, \pm \sqrt{\frac{2}{\sqrt{5}}}, \frac{1}{\sqrt{5}}).$

\end{exm}

\section{Convex sparse matrix constrained optimization}
\label{sc:convex}

This section discusses convex sparse matrix polynomial optimization.
We consider that each $f_i$ is convex in $\xdt{i}$ and each $G_i$ is concave in $\xdt{i}$.

Suppose $u\in K$ is a minimizer of \reff{spar:matPOP}.
Recall that $\nabla G_i(\udt{i})$ denotes the derivative of
$G_i$ at $\udt{i}$, given by (\ref{eq:deri_G}),
and $\nabla G_i(\udt{i})^*$ denotes the adjoint mapping
of $\nabla G_i(\udt{i})$, given by (\ref{eq:adj_deri_G}).
If the nondegeneracy condition (\ref{eq:NDC}) holds at $u$,
or the {\it Slater's condition}\footnote{
Slater's condition is said to hold for (\ref{spar:matPOP})
if there exists $u$ such that $G_i(\udt{i})\succ0$ for all $i=1\ddd m$.}
holds, then there exist Lagrange multiplier matrices $\Lmd_i \in \mc{S}^{\ell_i}$ such that
\be\label{FOOC}\left\{\
\begin{gathered}
\nabla f(u) = \sum\limits_{i=1}^{m} \nabla G_i(\udt{i})^*[\Lmd_i], \\
\Lmd_i \succeq 0, \enspace G_i(\udt{i}) \succeq 0, \ i\in [m], \\
\langle \Lmd_i , G_i(\udt{i}) \rangle = 0, \ i \in [m].
\end{gathered}\right.
\ee
The above is called the {\it first order optimality condition} (FOOC).
We refer to \cite{Shapiro97, Sun06} for more details about optimality conditions of nonlinear semidefinite optimization.
For the above $\Lambda_i$, define the Lagrange function
\be\label{eq:Lagrangian}
\mc{L}_i(x) \, \coloneqq \, f_i(\xdt{i}) - \langle \Lmd_i , G_i(\xdt{i}) \rangle.
\ee
Denote the symmetric $n_i$-by-$n_i$ matrix $H_i$,
with entries
\[
(H_i)_{st} \coloneqq 2\big\langle \Lmd_i, \nabla_{x_s}G_i(\udt{i})G_i(\udt{i})^{\dag}\nabla_{x_t}G_i(\udt{i}) \big\rangle,
\]
for $s,t \in \Dt_i$.
In the above, $\nabla_{x_s}$ denotes the partial derivative with respect to $x_s$, and the superscript $\dag$ denotes the Moore-Penrose inverse.
Define
\[
\mc{N}_i \coloneqq \Big\{ v = (v_j)_{j \in \Dt_i} : \sum_{j \in \Dt_i} v_j \cdot E^T \nabla_{x_j} G_i(\udt{i}) E = 0 \Big\},
\]
where $E$ is a matrix whose columns form a basis of $\ker \, G_i(\udt{i})$.

\begin{theorem}\label{tm:convex}
Suppose $u$ is a minimizer of \reff{spar:matPOP} and the FOOC (\ref{FOOC}) holds.
Assume each $f_{i}$ is convex and each $G_i \in \mc{S}\re[\xdt{i}]^{\ell_i \times \ell_i}$ is concave. Then,
\begin{enumerate}[(i)]
	\item There exist sparse polynomials $p_i\in \re[x_{\Delta_i}]$ such that
	\be  \label{eq:f+p>=0}
	\boxed{
		\begin{array}{c}
			\ p_1  + \cdots + p_m  + f_{\min}  =0 , \\
			f_i + p_i \ge 0~ \text{on} ~ K_{\Dt_i}, \,  i=1\ddd m.
	\end{array}}
	\ee
	
	\item Suppose each $\qm_{\Dt_i}[G_i]$ is archimedean.
	Assume that for each $i$, the NDC (\ref{eq:NDC}) for $G_i$ holds at $\udt{i}$, $\rank\, G_i(\udt{i})+\rank\,\Lambda_i = \ell_i$, and
	\[
	v^T (\nabla^2_{\xdt{i}} \mc{L}_i(u) + H_i )v>0, \quad
	\forall \, 0 \ne v\in \mc{N}_i.
	\]
	Then, the sparse matrix Moment-SOS hierarchy of \reff{ksos:MPOP}-\reff{kmom:MPOP}
	is tight and we have
	\[
	f-f_{\min}\in \qm[G]_{spa, 2k}
	\]
	for all $k$ big enough.
\end{enumerate}
\end{theorem}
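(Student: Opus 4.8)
The plan is to prove (i) by exhibiting explicit \emph{affine} polynomials $p_i$ coming from the Lagrange multipliers $\Lmd_i$ of the FOOC \reff{FOOC}, and then to deduce (ii) by feeding these same $p_i$ into Theorem~\ref{thm:sepa:match}(i). For (i), set $g_i(\xdt{i}) \coloneqq \langle \Lmd_i, G_i(\xdt{i}) \rangle$, so that $\mc{L}_i = f_i - g_i$. Since $\Lmd_i \succeq 0$ and $G_i$ is concave, $g_i$ is concave, $g_i \ge 0$ on $K_{\Dt_i}$, and $g_i(\udt{i}) = 0$ by complementarity; hence $\mc{L}_i$ is convex in $\xdt{i}$, and its first-order convexity inequality at $\udt{i}$ gives $f_i(\xdt{i}) - g_i(\xdt{i}) \ge \mc{L}_i(\udt{i}) + \nabla_{\xdt{i}}\mc{L}_i(u)^{T}(\xdt{i} - \udt{i})$. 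Defining
\[
p_i(\xdt{i}) \coloneqq -\mc{L}_i(\udt{i}) - \nabla_{\xdt{i}}\mc{L}_i(u)^{T}(\xdt{i} - \udt{i}),
\]
this reads $f_i + p_i \ge g_i \ge 0$ on $K_{\Dt_i}$, which is the second line of \reff{eq:f+p>=0}. Summing over $i$, the constant parts contribute $-\sum_i \mc{L}_i(\udt{i}) = -f(u) + \sum_i \langle \Lmd_i, G_i(\udt{i})\rangle = -f_{\min}$, while the linear parts pair $-\big(\sum_i \nabla_{\xdt{i}} f_i(u) - \sum_i \nabla_{\xdt{i}} g_i(u)\big)$ against $x-u$; because $\nabla_{\xdt{i}} g_i(u)$ embeds in $\re^n$ as $\nabla G_i(\udt{i})^{*}[\Lmd_i]$ and $\sum_i \nabla_{\xdt{i}} f_i(u) = \nabla f(u)$, the first identity in \reff{FOOC} annihilates the linear term, so $p_1 + \cdots + p_m + f_{\min} = 0$.

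For (ii), take the affine $p_i$ from (i) and put $h_i \coloneqq f_i + p_i \in \re[\xdt{i}]_{2k}$ (here $\deg f_i \le 2k_0$). It suffices to show $h_i \in \qm_{\Dt_i}[G_i]_{2k}$ for all large $k$: Theorem~\ref{thm:sepa:match}(i) with these $p_i$ then gives $f - f_{\min} \in \qm[G]_{spa,2k}$, hence $f^{spa}_k = f_{\min}$, and moment tightness follows from $f^{spa}_k \le f^{smo}_k \le f_{\min}$. To get $h_i \in \qm_{\Dt_i}[G_i]$, I would invoke the representation theorem under optimality conditions for matrix-constrained polynomial optimization (the matrix version in \cite{Huang2024}), applied to the auxiliary problem $\min\{ h_i(\xdt{i}) : G_i(\xdt{i}) \succeq 0\}$. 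By (i), $\udt{i}$ is a minimizer of it with optimal value $0$; its first-order optimality condition holds at $\udt{i}$ with the \emph{same} multiplier $\Lmd_i$, since $\nabla_{\xdt{i}} h_i(u) = \nabla_{\xdt{i}} g_i(u) = \nabla G_i(\udt{i})^{*}[\Lmd_i]$; and the associated Lagrangian is $h_i - g_i$, whose Hessian in $\xdt{i}$ is $\nabla^2_{\xdt{i}}\mc{L}_i(u)$ because $p_i$ is affine. Hence the three hypotheses of (ii) -- the NDC \reff{eq:NDC} for $G_i$ at $\udt{i}$, the strict complementarity $\rank\, G_i(\udt{i}) + \rank\, \Lmd_i = \ell_i$, and $v^{T}(\nabla^2_{\xdt{i}}\mc{L}_i(u) + H_i)v > 0$ for $0 \ne v \in \mc{N}_i$ -- are exactly the optimality data that the cited theorem requires for this auxiliary problem. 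Together with the assumed archimedean property of $\qm_{\Dt_i}[G_i]$, it yields $h_i \in \qm_{\Dt_i}[G_i]_{2k_i}$ for some $k_i$; taking $k \ge \max\{k_0,k_1,\ldots,k_m\}$ finishes the proof.

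I expect the main obstacle to be this last invocation, namely matching the hypotheses of \cite{Huang2024} to each auxiliary problem $\min_{K_{\Dt_i}} h_i$. Two points require care. First, the boundary representation theorem needs its optimality hypotheses to hold at \emph{every} global minimizer of $h_i$ on $K_{\Dt_i}$, so one must show $\udt{i}$ is the unique one: here $h_i$ is convex (a convex $f_i$ plus an affine $p_i$) and $K_{\Dt_i}$ is convex, while the second-order condition forces quadratic growth of $h_i$ at $\udt{i}$, and a short convexity argument then excludes any other zero of $h_i$ on $K_{\Dt_i}$. Second, one must verify that the matrix $H_i$ defined before the theorem is precisely the curvature (``sigma'') term in the no-gap second-order sufficient condition for the nonlinear semidefinite problem, and that $\mc{N}_i$ is the correct critical cone once strict complementarity is in force; this is the bookkeeping that connects the conditions as stated to the exact hypotheses of \cite{Huang2024}. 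The remaining work -- the convexity identities in (i) and the duality chain in (ii) -- is routine given Theorem~\ref{thm:sepa:match}.
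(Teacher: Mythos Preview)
Your proposal is correct and takes essentially the same approach as the paper: the affine $p_i$ you define coincide with the paper's choice \reff{eq:pi_convex}, part (i) is argued via the same convexity/FOOC computation, and part (ii) is obtained exactly as you describe by applying \cite[Theorem~1.1]{Huang2024} to each auxiliary problem $\min_{K_{\Dt_i}}(f_i+p_i)$ and then invoking Theorem~\ref{thm:sepa:match}(i). Your extra care about uniqueness of the auxiliary minimizer and about matching $H_i,\mc{N}_i$ to the SOSC data is a point the paper leaves implicit; your sketched convexity-plus-SOSC argument is the right way to close it.
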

\begin{proof}
(i) For each $i=1,\ldots, m$, let
\be\label{eq:pi_convex}
p_i(x) \, \coloneqq \, -(x-u)^T\big(\nabla f_i(\udt{i}) - \nabla G_i(\udt{i})^*[\Lmd_i] \big) - f_i(\udt{i}).
\ee
Since $f_i$ and $G_i$ only depend on $\xdt{i}$, we have $p_i \in \re[\xdt{i}]$.
Note that
\[
f_{\min} = \sum_{i=1}^{m} f_i(\udt{i}), \quad \nabla f(u) =  \sum_{i=1}^{m} \nabla f_i(\udt{i}).
\]
For the above $p_i$, the first equation in \reff{FOOC} implies
\begin{align*}
	&  p_1 + \cdots + p_m + f_{\min} \\
	= &-(x-u)^T \Big[ \nabla f(u) - \sum_{i=1}^{m}  \nabla G_i(\udt{i})^*[\Lmd_i]    \Big] - \sum_{i=1}^{m} f_i(\udt{i}) + f_{\min} \\
	= &-(x-u)^T 0 + 0 = 0.
\end{align*}
Note that for each $i=1\ddd m$, it holds
\[
\begin{aligned}
	\nabla f_i(\udt{i}) + \nabla p_i(\udt{i}) &= \nabla f_i(\udt{i}) -\big( \nabla f_i(\udt{i}) - \nabla G_i(\udt{i})^*[\Lmd_i] \big) \\
	& = \nabla G_i(\udt{i})^*[\Lmd_i].
\end{aligned}
\]
Thus, by the assumption that (\ref{FOOC}) holds, each $\udt{i}$ satisfies the FOOC for
\be\label{eq:minf+p} \left\{
\begin{array}{cl}
	\displaystyle\min_{\xdt{i} \in\re^{\Dt_i}} & f_i(\xdt{i})+p_i(\xdt{i}) \\
	\st & G_i(\xdt{i}) \succeq 0.
\end{array}
\right.
\ee
By (\ref{eq:pi_convex}), each $p_i(\xdt{i})$ is an affine function in $\xdt{i}$.
So, $f_i(\xdt{i}) + p_i(\xdt{i})$ is convex in $\xdt{i}$, and $\udt{i}$ is a minimizer of (\ref{eq:minf+p}).
Moreover, we can see that
\[
f_i(\udt{i}) + p_i(\udt{i}) = 0.
\]
Thus, the minimum value of \reff{eq:minf+p} is $0$ and $f_i+p_i\ge0 $ on $K_{\Dt_i}$.
Therefore, \eqref{eq:f+p>=0} holds.

(ii)
For each $i$, note that $\mc{L}_i(x)$ is the Lagrange function for the optimization problem (\ref{eq:minf+p}).
By the given assumption, the nondegeneracy condition, strict complementarity condition, and second order sufficient condition all hold at $\udt{i}$ for (\ref{eq:minf+p}).
Let $p_1\ddd p_m$ be the polynomials in item (i).
By \cite[Theorem~1.1]{Huang2024}, there exists $k_i\in \N$ such that
\[
f_i+p_i\in \qm_{\Dt_i}[G_i]_{2k_i}.
\]
Since $p_1+\cdots + p_m+f_{\min} = 0$ by item~(i), Theorem~\ref{thm:sepa:match}(i) implies that
\[
f-f_{\min}\in \qm[G]_{spa, 2k},
\]
for all $k\ge\max\{k_1\ddd k_m\}$.
Therefore, the sparse matrix Moment-SOS hierarchy of \reff{ksos:MPOP}-\reff{kmom:MPOP} is tight.
\end{proof}

Now we consider the special case that (\ref{spar:matPOP}) is SOS-convex.
We refer to Section~\ref{sc:pre:cvx} for the SOS-convexity/concavity.
Recall that $k_0$ is given in \reff{eq:k0}.

\begin{theorem}\label{tm:sosconvex}
Suppose $u$ is a minimizer of \reff{spar:matPOP}.
Assume each $f_i$ is SOS-convex and each $G_i$ is SOS-concave. Then,
\begin{itemize}
	\item[(i)]
	For all $k\geq k_0$, we have $f^{smo}_k=f_{\min}$. Moreover, if Slater's condition holds, then $f_k^{spa}=f_{\min}$ and
	\begin{equation}\label{f-fminqmG}
		f-f_{\min}\in \qm[G]_{spa, 2k}.
	\end{equation}
	\item[(ii)] For every minimizer $y^*$ of~\eqref{kmom:MPOP}, the point
	$x^* \coloneqq (y^*_{e_1},\ldots,y^*_{e_n})$
	is a minimizer of~\eqref{spar:matPOP}.
\end{itemize}
\end{theorem}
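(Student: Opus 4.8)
The plan is to obtain the moment-side conclusions of~(i), and all of~(ii), from a Jensen-type moment inequality for SOS-convex data, and to obtain the SOS-side conclusion of~(i) by feeding the affine multipliers produced in Theorem~\ref{tm:convex} into Theorem~\ref{thm:sepa:match}.

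First I would record the standard fact (see, e.g., \cite{nie2023moment}) that an SOS-convex polynomial $\phi$ satisfies $\phi(z)-\phi(a)-\nabla\phi(a)^T(z-a)\in\Sigma$ for every point $a$, with an SOS summand of degree at most $\deg\phi$. Applying the Riesz functional $\mathscr{L}_{\ydt{i}}$ to this identity yields the Jensen inequality $\langle\phi,\ydt{i}\rangle\ge\phi(\hat x_{\Dt_i})$ whenever $M^{(k)}_{\Dt_i}[\ydt{i}]\succeq 0$, $y_0=1$, $k\ge k_0$, and $\deg\phi\le 2k_0$, where $\hat x_{\Dt_i}\coloneqq(y_{e_j})_{j\in\Dt_i}$ is the subvector of first-order moments. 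Since $G_i$ is SOS-concave, for each $\xi\in\re^{\ell_i}$ the scalar polynomial $-\xi^T G_i(\xdt{i})\xi$ is of this type, so the same argument gives $\xi^T G_i(\hat x_{\Dt_i})\xi\ge\xi^T G_i[\ydt{i}]\xi\ge 0$, the last inequality because $G_i[\ydt{i}]$ is a principal submatrix of the positive semidefinite localizing matrix $L^{(k)}_{G_i}[\ydt{i}]$. Hence $G_i(\hat x_{\Dt_i})\succeq 0$ for every $i$, so the first-moment point $\hat x\coloneqq(y_{e_1},\ldots,y_{e_n})$ lies in $K$; and applying Jensen to each SOS-convex $f_i$ gives $\langle f,y\rangle=\sum_{i=1}^m\langle f_i,\ydt{i}\rangle\ge\sum_{i=1}^m f_i(\hat x_{\Dt_i})=f(\hat x)\ge f_{\min}$ for every $y$ feasible to~\eqref{kmom:MPOP}. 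Since the truncated moment sequence of the minimizer $u$ is feasible for~\eqref{kmom:MPOP} with objective value $f_{\min}$, this proves $f^{smo}_k=f_{\min}$ for all $k\ge k_0$; and part~(ii) follows immediately, since for a minimizer $y^*$ the same estimate yields $x^*=\hat x\in K$ with $f(x^*)\le\langle f,y^*\rangle=f^{smo}_k=f_{\min}$, so $x^*$ is a minimizer of~\eqref{spar:matPOP}.

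For the SOS statement of~(i) I would use Slater's condition to guarantee that the FOOC~\eqref{FOOC} holds at $u$, so that Theorem~\ref{tm:convex}(i) applies (SOS-convexity implies convexity and SOS-concavity implies concavity). It supplies affine $p_i\in\re[\xdt{i}]$ with $p_1+\cdots+p_m+f_{\min}=0$, $(f_i+p_i)(\udt{i})=0$, and $\nabla(f_i+p_i)(\udt{i})=\nabla G_i(\udt{i})^*[\Lambda_i]$. The crux is to show $f_i+p_i\in\qm_{\Dt_i}[G_i]_{2k}$ for every $k\ge k_0$. Writing $\Lambda_i=\sum_\ell\xi_{i,\ell}\xi_{i,\ell}^T$, the polynomial $\langle\Lambda_i,G_i\rangle=\sum_\ell\xi_{i,\ell}^T G_i\xi_{i,\ell}$ lies in $\qm_{\Dt_i}[G_i]$ with degree $\deg G_i\le 2k_0$; meanwhile $q_i\coloneqq f_i+p_i-\langle\Lambda_i,G_i\rangle$ is SOS-convex --- it is the sum of the SOS-convex $f_i$, the affine $p_i$, and $\sum_\ell\xi_{i,\ell}^T(-G_i)\xi_{i,\ell}$, the last being SOS-convex because $G_i$ is SOS-concave --- and by complementarity and stationarity it satisfies $q_i(\udt{i})=0$ and $\nabla q_i(\udt{i})=0$. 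Hence $q_i(\xdt{i})=q_i(\xdt{i})-q_i(\udt{i})-\nabla q_i(\udt{i})^T(\xdt{i}-\udt{i})$ is a sum of squares of degree at most $2k_0$, so $f_i+p_i=q_i+\langle\Lambda_i,G_i\rangle\in\qm_{\Dt_i}[G_i]_{2k_0}\subseteq\qm_{\Dt_i}[G_i]_{2k}$. Theorem~\ref{thm:sepa:match}(i) then gives $f-f_{\min}\in\qm[G]_{spa,2k}$, hence $f^{spa}_k\ge f_{\min}$; combined with the trivial bound $f^{spa}_k\le f^{smo}_k=f_{\min}$ this gives $f^{spa}_k=f_{\min}$ and~\eqref{f-fminqmG}. (Alternatively, Slater's condition furnishes a strictly feasible point of~\eqref{kmom:MPOP} --- e.g.\ the truncated moment sequence of the uniform measure on a small ball inside the nonempty interior of $K$ --- and conic strong duality gives $f^{spa}_k=f^{smo}_k$ with attainment on the SOS side at every order $k\ge k_0$.)

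The step I expect to be the main obstacle is the SOS-convexity bookkeeping just above: checking that $q_i$ is genuinely SOS-convex, that its first-order vanishing at $\udt{i}$ forces it to be a sum of squares, and --- most delicately --- that every certificate in play (the Jensen remainders, the square term for $q_i$, and $\langle\Lambda_i,G_i\rangle$) has degree at most $2k_0$, so that the conclusion is valid for all $k\ge k_0$ and not merely for $k$ large. The Jensen inequalities themselves, in both the scalar and the matrix form, are routine once these degree truncations are tracked.
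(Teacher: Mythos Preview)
Your proof is correct. For the moment-side equality $f^{smo}_k=f_{\min}$ and for part~(ii), you argue exactly as the paper does: apply the Jensen inequality for SOS-convex scalar polynomials to each $f_i$ and to each $-\xi^T G_i\xi$, then use that $G_i[\ydt{i}]$ is a principal submatrix of the positive semidefinite localizing matrix to conclude feasibility of the first-moment point.

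Where you differ from the paper is in the SOS-side statement under Slater's condition. The paper takes the short route: Slater gives a strictly feasible point for~\eqref{kmom:MPOP}, so conic strong duality holds and the SOS relaxation attains its optimum, whence $f^{spa}_k=f^{smo}_k=f_{\min}$ and $f-f_{\min}\in\qm[G]_{spa,2k}$ directly. You instead build an explicit certificate: invoke the affine $p_i$ from Theorem~\ref{tm:convex}(i), decompose $f_i+p_i=q_i+\langle\Lambda_i,G_i\rangle$, and show $q_i\in\Sigma[\xdt{i}]_{2k_0}$ via the SOS-convex first-order remainder, so that $f_i+p_i\in\qm_{\Dt_i}[G_i]_{2k_0}$ and Theorem~\ref{thm:sepa:match}(i) applies. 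Both arguments are valid; yours is longer but has the merit of exhibiting the sparse Positivstellensatz certificate concretely (and of showing it lives already at order $k_0$, not merely at some unspecified $k$), which fits the paper's theme of characterizing tightness via~\eqref{eq:p+fmin_in_idl}. You correctly note the duality shortcut as an alternative at the end. The degree bookkeeping you flag as delicate is fine: an SOS polynomial of degree at most $2k_0$ always lies in $\Sigma_{2k_0}$, and $\langle\Lambda_i,G_i\rangle$ uses constant multiplier vectors, so both pieces land in $\qm_{\Dt_i}[G_i]_{2k_0}$.
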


\begin{proof} (i) Suppose $y$ is a feasible solution of the relaxation \reff{kmom:MPOP} and let $u \coloneqq (y_{e_1}, \ldots, y_{e_n})$.
For each $i$, pick an arbitrary $\xi \in \re^{\ell_i}$. 
Then the scalar polynomial
\[
g_{\xi}(\xdt{i}) \coloneqq \xi^T G_i(\xdt{i}) \xi
\]
is SOS-concave in $\xdt{i}$ because each $G_i$ is SOS-concave by the assumption.
Since each $f_i$ is SOS-convex, by Jensen's inequality (see \cite{Las09} or \cite[Chap.~7]{nie2023moment}), we have
\be\label{jen_ineq}
f_i(\udt{i}) \le \langle f_i , \ydt{i} \rangle, \quad -g_{\xi}(\udt{i}) \le \langle -g_{\xi} , \ydt{i} \rangle.
\ee
The second inequality of \reff{jen_ineq} implies that
\[
\xi^T G_i(\udt{i}) \xi = g_{\xi}(\udt{i}) \ge  \langle g_{\xi} , \ydt{i} \rangle = \xi^T G_i[\ydt{i}] \xi .
\]
Since $y$ is feasible for (\ref{kmom:MPOP}), the localizing matrix
$
L_{G_i}^{(k)}[\ydt{i}] \succeq 0.
$
Note that $G_i[\ydt{i}]$ is a principal sub-matrix of $L_{G_i}^{(k)}[\ydt{i}]$, so
\[
G_i[\ydt{i}] \succeq 0 \quad \text{and hence} \quad \xi^T G_i[\ydt{i}] \xi \ge 0.
\]
Since $\xi$ is arbitrary, we know $G_i(\udt{i}) \succeq 0$.
This is true for all $i$, hence $u$ is a feasible point for \reff{spar:matPOP}.
Also, by the first inequality of \reff{jen_ineq},
\be\label{eq:sum_jensen}
f(u) = \sum_{i=1}^{m} f_i(\udt{i}) \le \sum_{i=1}^{m} \langle f_i , \ydt{i} \rangle = \langle f , y \rangle.
\ee
The above holds for all $y$ that is feasible for \reff{kmom:MPOP},
so $f_{\min} \le f_k^{smo}$.
On the other hand, we always have $f_k^{smo} \le f_{\min}$.
Therefore, $f_k^{smo} = f_{\min}$.

Furthermore, when Slater's condition holds, the moment relaxation~\eqref{kmom:MPOP} has strictly feasible points (see Theorem~2.5.2 of \cite{nie2023moment}). So, the strong duality holds between~\eqref{ksos:MPOP} and~\eqref{kmom:MPOP}, and~\eqref{ksos:MPOP} achieves its optimal value. Therefore, $f_k^{spa} = f_k^{smo} = f_{\min}$ and~\eqref{f-fminqmG} holds.

(ii) Let $y^*$ be a minimizer of~\eqref{kmom:MPOP}.
Then $\lip f, y^*\rip \ge f(x^*)$ by (\ref{eq:sum_jensen}) and $x^*$ is feasible for \reff{spar:matPOP} . Therefore, we have
\[
f_{\min} = f_k^{smo} = \lip f, y^*\rip \ge f(x^*) \ge f_{\min},
\]
which forces $f(x^*) = f_{\min}$. So $x^*$ is a minimizer of~\eqref{spar:matPOP}.
\end{proof}

\begin{exm}\label{soscon}
Let $\Dt_1 = \{1,2,3\}$ and $\Dt_2 = \{2,3,4\}$. Consider the sparse matrix polynomial optimization
\be\label{eg_soscon}
\left\{
\begin{array}{cl}
	\min\limits_{x \in \re^4} & f_1(\xdt{1}) + f_2(\xdt{2}) \\
	\st & G_i(\xdt{i}) \succeq 0, \ i = 1,2.
\end{array}
\right.
\ee
In the above, each
\[
f_i(\xdt{i}) = x_i^4 + 2x_{i+1}^4 + x_{i+2}^4 + 2x_{i+1}^2(x_i^2 + x_{i+2}^2) + x_i + x_{i+1} + x_{i+2},
\]
\[
G_i(\xdt{i}) =
\left[
\begin{array}{rrrrr}
	1-x_i^2 - x_{i+2}^2 && x_ix_{i+1} &&  x_ix_{i+2} \\
	x_ix_{i+1} & & 1-x_{i+1}^2-x_i^2 & & x_{i+1}x_{i+2} \\
	x_ix_{i+2} &&  x_{i+1}x_{i+2} & &  1-x_{i+2}^2-x_{i+1}^2
\end{array}
\right].
\]
Observe that each $f_i$ can be written as
\[
f_i(\xdt{i}) = \bbm x_i^2 \\ x_{i+1}^2 \\ x_{i+2}^2 \ebm^T
\bbm 1 & 1 & 0 \\
1 & 2 & 1 \\
0 & 1 & 1
\ebm
\bbm x_i^2 \\ x_{i+1}^2 \\ x_{i+2}^2 \ebm + \text{linear terms}.
\]
As shown in Example~7.1.4 in \cite{nie2023moment}, we know that $f_i$ is SOS-convex,
since the matrix in the middle is psd and has nonnegative entries.
The matrix $G_i$ is SOS-concave since $\nabla^2(-\xi^T G_i \xi) \succeq 0$
for all $\xi \in \re^3$. This is because the bi-quadratic form
\begin{align*}
	\frac{1}{2}z^T \nabla^2(-\xi^T G_i \xi) z &= z_1^2\xi_1^2 +  z_2^2\xi_2^2 +  z_3^2\xi_3^2
	-2(z_1z_2\xi_1\xi_2 + z_2z_3\xi_2\xi_3 + z_3z_1\xi_3\xi_1 ) \\
	& \quad + z_1^2\xi_2^2 +  z_2^2\xi_3^2 +  z_3^2\xi_1^2
\end{align*}
is nonnegative everywhere (see Section~4 of \cite{ChoLam77}).
The sparse matrix Moment-SOS hierarchy of \reff{ksos:MPOP}-\reff{kmom:MPOP} is tight for $k=2$.
We get $f_{\min} = f_2^{smo} \approx -2.0731$ and the minimizer
$(-0.5361,-0.4230,-0.4230,-0.5361)$.
\end{exm}

\section{Numerical Experiments}
\label{sc:ne}
This section provides numerical experiments for
the sparse matrix Moment-SOS hierarchy of (\ref{ksos:MPOP})-(\ref{kmom:MPOP}).
For all examples in this section, we use {\tt YALMIP} \cite{yalmip} to implement sparse matrix Moment-SOS relaxations.
Moreover, we apply {\tt Gloptipoly 3} \cite{GloPol3} to check flat truncation conditions and extract minimizers.
All semidefinite programs are solved by the software {\tt Mosek} \cite{mosek}.
For the flat truncation condition (\ref{eq:ft}), we evaluate ranks of moment matrices using the method described in Remark~\ref{rmk:ftrank} with $\varepsilon = 10^{-3}$.
The computation is implemented in MATLAB 2023b,
in an Apple MacBook Pro Laptop in MacOS 14.2.1 with 12$\times$Apple M3 Pro CPU and RAM 18GB.
For neatness, only four decimal digits are displayed for computational results.

For Example~\ref{ex:exp_sol}, we solve the sparse matrix Moment-SOS relaxation (\ref{ksos:MPOP})-(\ref{ksos:MPOP}). When $k=3$, the flat truncation condition (\ref{eq:ft}) holds for $t=1$ and $r_i=1$ for each $i$.
We get $f_{\min} = f^{smo}_3 = -1$ and the minimizer $(1,1,1)$.
It took around 0.68 second.

\subsection{Some explicit examples}

\begin{example}
\label{ex:nonconvex_hol}
Consider the following quadratic SDP arising from \cite{Hol04}:
\be\label{eq:holvariation_opt}
\left\{
\begin{array}{cl}
	\displaystyle\min_{x\in \re^4} & \underbrace{-x_2+(x_1-0.4)^2}_{f_1} +
	\underbrace{2x_1x_3+x_3^2}_{f_2} + \underbrace{x_4+x_1x_4-x_4^2}_{f_3}\\
	\st & G_i(\xdt{i})\succeq 0, \ i=1\ddd 3,
\end{array}\right.
\ee
In the above, $\Dt_1 = \{1,2\}$, $\Dt_2 = \{1,3\}$, $\Dt_3 = \{1,4\}$,
and for each $i$,
\[
G_i(\xdt{i}) \, = \, \left[\begin{array} {ccc}
	2+3x_1^2-x_{i+1} & 2-3x_1 & \\
	2-3x_1 & 1-x_1(x_1+1)-x_{i+1}&\\
	& & \diag(g_i)
\end{array}\right],
\]
where $g_i\coloneqq [(x_1-0.4)^2+(x_{i+1}-0.2)^2-0.5, 1-x_1^2, 1-x_{i+1}^2]^T$.
For (\ref{eq:holvariation_opt}), we solve the sparse matrix Moment-SOS relaxation
(\ref{ksos:MPOP})-(\ref{kmom:MPOP}) with relaxation order $k=2$.
At the optimal solution $y^*$ of (\ref{kmom:MPOP}), the eigenvalues of $M^{(1)}_{\Dt_i}[y^*_{\Dt_i}]$ are:
 \[\begin{aligned}
 M^{(1)}_{\Dt_1}[y^*_{\Dt_1}]:& \quad 1.760,\quad 7.2916 \cdot 10^{-8},\quad 2.6887 \cdot 10^{-8};\\
 M^{(1)}_{\Dt_2}[y^*_{\Dt_2}]:& \quad 2.200,\quad 3.2490 \cdot 10^{-8},\quad 2.2737 \cdot 10^{-8};\\
 M^{(1)}_{\Dt_3}[y^*_{\Dt_3}]:& \quad 2.600,\quad 3.2150 \cdot 10^{-8},\quad 9.4321 \cdot 10^{-9} .
 \end{aligned}
 \]
Note 
$M^{(0)}_{\Dt_i}[y_{\Dt_i}]$ is the $1 \times 1$ matrix of one for all $i$.
Since $d_i = 1$, the rank condition (\ref{eq:ft}) holds with $r_i = 1$ and $t=1$.
By Algorithm~\ref{alg:getminimizer}, we get $f_{\min} = f_2^{smo} = -2.8347$ and the minimizer
\[ (0.7746,   -0.3997,    -0.7746,   -1.0000). \]
Moreover, the condition (\ref{eq:p+fmin_in_idl}) holds with
\[\begin{aligned}
	p_1(x_1,x_2) & = 0.1234+0.4632x_1-1.5067x_1^2+0.1137x_1^3-0.4748x_1^4,\\
	p_2(x_1,x_3) & = 0.2968-0.3535x_1+0.5766x_1^2+0.3064x_1^3+0.2463x_1^4,\\
	p_3(x_1,x_4) & = 2.4145-0.1097x_1+0.9301x_1^2-0.4200x_1^3+0.2285x_1^4.
\end{aligned}\]
It took around 0.34 second.
\end{example}

\begin{example}\label{ex:from_henrion}
Consider the matrix polynomial optimization arising from \cite{henrion2006convergent}:
\be\label{eq:henrionvariation_opt}
\left\{
\begin{array}{cl}
	\displaystyle\min_{x\in \re^3} & \underbrace{(-x_1^2-x_2^2)}_{f_1} +
	\underbrace{(-x_2^2-x_3)}_{f_2}\\
	\st & G_i(\xdt{i})\succeq 0,\ i=1,2.
\end{array}\right.
\ee
In the above, $\Dt_1 = \{1,2\}$, $\Dt_2 = \{1,3\}$, and
\[\begin{aligned}
	G_1(x_1,x_2)\,\coloneqq\,\left[\begin{array}{cc}
		1-4x_1^2x_2^2 & x_1 \\ x_1 & 4-x_1^2-x_2^2
	\end{array}
	\right],\\
	G_2(x_2,x_3)\,\coloneqq\,\left[\begin{array}{cc}
		1-4x_2^2x_3^2 & x_3 \\ x_3 & 4-x_2^2-x_3^2
	\end{array}
	\right].
\end{aligned}\]
We solve the sparse matrix Moment-SOS relaxation (\ref{ksos:MPOP})-(\ref{kmom:MPOP}) with relaxation order $k=5$.
At the optimal solution $y^*$ of (\ref{kmom:MPOP}), the largest five eigenvalues of $M^{(4)}_{\Dt_i}[y^*_{\Dt_i}]$ are:
 \[\begin{aligned}
 M^{(4)}_{\Dt_1}[y^*_{\Dt_1}]:& \quad 264.83,\quad 66.48,\quad 0.9137,\quad 0.2302,\quad 2.2319 \cdot 10^{-6};\\
 M^{(4)}_{\Dt_2}[y^*_{\Dt_2}]:& \quad 265.06,\quad 67.39,\quad 4.1752 \cdot 10^{-6},\quad 2.4326 \cdot 10^{-6},\quad 1.1553 \cdot 10^{-6}.
 \end{aligned}
 \]
Moreover, the the largest five eigenvalues of $M^{(2)}_{\Dt_i}[y^*_{\Dt_i}]$ are:
 \[\begin{aligned}
 M^{(2)}_{\Dt_1}[y^*_{\Dt_1}]:& \quad 16.75,\quad 3.97,\quad 0.0545,\quad 0.0137,\quad 7.7310 \cdot 10^{-7};\\
 M^{(2)}_{\Dt_2}[y^*_{\Dt_2}]:& \quad 16.76,\quad 67.39,\quad 8.1288 \cdot 10^{-7},\quad 3.1082 \cdot 10^{-7},\quad 2.9422 \cdot 10^{-7}.
 \end{aligned}
 \]
Since $d_1 = d_2 = 2$,
the rank condition (\ref{eq:ft}) holds with $r_1 = 4$ and $r_2 = 2$ for $t=4$.
By Algorithm~\ref{alg:getminimizer}, we get $f_{\min} = f_5^{smo} = -8.0683$ and four minimizers:
\[ \begin{gathered}
( 0.1172,   1.9922,   0.1172),\quad ( 0.1172,   -1.9922,  0.1172),\\
( -0.1172,   1.9922,  0.1172),\quad( -0.1172,   -1.9922,   0.1172).
\end{gathered}\]
Moreover, the condition (\ref{eq:p+fmin_in_idl}) holds with
\[
\begin{aligned}
p_1(x_1,x_2) & = 4.2128-0.1557x_2^2-0.2786x_2^4+0.0566x_2^6,
\end{aligned}
\]
and $p_2 = -f_{\min} - p_1$.
It took around 0.56 second.
\end{example}

In the following examples, for neatness of the paper, we do not display the eigenvalues of moment matrices and polynomials
$p_1\ddd p_m$ satisfying (\ref{eq:p+fmin_in_idl})
when the sparse hierarchy (\ref{ksos:MPOP})-(\ref{kmom:MPOP}) is tight.

\begin{example}
Let $\Dt_1 = \{1,2,3\}$ and $\Dt_2 = \{2,3,4\}$.
Consider the optimization
\be\label{eq:sos_convex_opt}
\left\{
\begin{array}{cl}
	\displaystyle\min_{x\in \re^n} & f_1(\xdt{1})+f_2(\xdt{2})\\
	\st & G_i(\xdt{i})\succeq 0, \ i=1,2.
\end{array}\right.
\ee
In the above, $f_1(\xdt{1}) \,\coloneqq\, x_1^6+x_2^6+x_3^6+x_1^2x_2^4+x_2^2x_3^4+x_3^2x_1^4$,
\[
f_2(\xdt{2}) \,\coloneqq\, x_2(x_2^3-1)+x_3(x_3^3-1)+x_4(x_4^3-1)+2x_2^2x_3^2+2x_3^2x_4^2,\]
and for each $i$,
\[
G_i(\xdt{i}) \,\coloneqq\, \left[\begin{array} {rrrrr}
	2-x_{i}^2-2x_{i+2}^2 & \quad & 1+x_{i}x_{i+1} &\quad & x_{i}x_{i+2}\\
	1+x_{i}x_{i+1} & \quad& 2-x_{i+1}^2-2x_{i}^2 & \quad& 1+x_{i+1}x_{i+2}\\
	x_{i}x_{i+2} &\quad & 1+x_{i+1}x_{i+2} &\quad & 2-x_{i+2}^2-2x_{i+1}^2\\
\end{array}\right].
\]
We remark that $f_1$ and $f_2$ are both SOS-convex.
Note that 
\[
f_1(\xdt{1}) = \frac{1}{2}[p(x_1, x_2) + p(x_2, x_3) + p(x_3, x_1)] 
\]
where
$
p(x_1, x_2) = x_1^6+x_2^6 + 2x_1^2x_2^4.
$
The polynomial $p(x_1, x_2)$ is SOS-convex, since
\begin{align*}
	\nabla^2 p(x_1, x_2) &= \begin{bmatrix}
		30x_1^4 + 4x_2^4 & 16x_1x_2^3 \\
		16x_1x_2^3 & 30x_2^4 + 24x_1^2x_2^2
	\end{bmatrix} \\
	&= \bbm 
	14x_1^4 + 4(x_2^2 - 2x_1^2)^2 & 0 \\
	0 & 14x_2^4 + 24x_1^2x_2^2 
	\ebm 
	+ 16 \bbm x_1x_2 \\ x_2^2 \ebm \bbm x_1x_2 \\ x_2^2 \ebm ^T.
\end{align*}
Moreover, note that 
\[
f_2(\xdt{2}) = q(x_2, x_3) + q(x_4, x_3) - x_2 - x_3 - x_4,
\]
where
$
q(x_2, x_3) = x_2^4 + x_3^4/2 + 2x_2^2x_3^2.
$
The polynomial $q(x_2, x_3)$ is SOS-convex, since
\begin{align*}
	\nabla^2 q(x_2, x_3) &= \begin{bmatrix}
		12x_2^2 + 4x_3^2 & 8x_2x_3 \\
		8x_2x_3 & 6x_3^2 + 4x_2^2
	\end{bmatrix} 
	= 
	\bbm
	4x_3^2 & 0 \\
	0 & 4x_2^2 + \frac{2}{3}x_3^2 
	\ebm
	+ 12\bbm x_2 \\ \frac{2}{3}x_3 \ebm \bbm x_2 \\ \frac{2}{3}x_3 \ebm^T.
\end{align*}
Each $-G_i(\xdt{i})$ is SOS-convex; see \cite[Example~10.5.3]{nie2023moment}.
For (\ref{eq:sos_convex_opt}), we solve the sparse matrix Moment-SOS relaxation (\ref{ksos:MPOP})-(\ref{kmom:MPOP}) with relaxation order $k=3$.
By Theorem~\ref{tm:sosconvex}, 
we get $f_{\min} = f_3^{smo} = -1.0342$ and the minimizer is
\[ (0.0000,    0.4421,    0.2586,    0.5207). \]
It took around 0.55 second.

\end{example}

\subsection{Joint minimizers}

Given polynomials $f_1(\xdt{1})$,\ldots, $f_m(\xdt{m})$,
we look for a joint local minimizer $u$ for them, i.e.,
each subvector $\udt{i}$ is a local minimizer of $f_i$.
Consider the unconstrained optimization problem
\be\label{joint_f}
\min\limits_{ \xdt{i} \in \re^{ \Dt_{i} } }  \quad  f_i(\xdt{i}).
\ee
The first and second order optimality conditions are
\be \label{gf=0:Hes>=0}
\nabla_{\xdt{i}} f_i(\udt{i}) = 0, \quad   \nabla_{\xdt{i}}^2  f_i(\udt{i}) \succeq 0.
\ee
The above is necessary for $\udt{i}$ to be a local minimizer for \reff{joint_f}.
If the symbol $\succeq$ in \reff{gf=0:Hes>=0} is replaced by $\succ$, then $\udt{i}$ must be a local minimizer.
As shown in \cite{nie2015loc}, when $f_i(\xdt{i})$ has generic coefficients, if $\udt{i}$ is a local minimizer, then \reff{gf=0:Hes>=0} holds with $ \nabla_{\xdt{i}}^2  f_i(\udt{i}) \succ 0$.

It is interesting to observe that \reff{gf=0:Hes>=0} is equivalent to
\[
\left[\begin{array}{cc}
0     & \nabla_{\xdt{i}} f_i(\udt{i})^T   \\
\nabla_{\xdt{i}} f_i(\udt{i}) &  \nabla_{\xdt{i}}^2  f_i(\udt{i})
\end{array} \right] \succeq 0.
\]
This leads to the sparse optimization problem
\be \label{PMI:joint:Hess>=0}
\left\{
\begin{array}{cl}
\displaystyle\min_{x\in \re^n} & f_1(\xdt{1})+ \cdots + f_m(\xdt{m})\\
\st & \left[\begin{array}{cc}
	0   &  \nabla_{\xdt{i}} f_i(\xdt{i})^T   \\
	\nabla_{\xdt{i}} f_i(\xdt{i}) &  \nabla_{\xdt{i}}^2  f_i(\xdt{i})
\end{array} \right] \succeq 0, i = 1,\ldots, m.
\end{array}\right.
\ee
We remark that if $u$ is a minimizer of \reff{PMI:joint:Hess>=0}
and each $\nabla_{\xdt{i}}^2  f_i(\udt{i}) \succ 0$, then $u$ is a joint local minimizer
for the polynomials $f_i(\xdt{i})$.

\begin{example}\label{eg_joint}
Let $\Dt_1 = \{1,2,3\}$, $\Dt_2 = \{3,4,5\}$, $\Dt_3 = \{5,6,7\}$, and
\[\begin{gathered}
	f_1 = x_1^4 + x_2^4 + x_3^3 - \frac{1}{8}(2x_1x_2+x_3^2+x_3), \quad f_2  =  x_3^4 + x_4^4+x_5^4 -x_3x_4x_5,\\
	f_3  = x_5^3 + x_6^4 + x_7^4 - \frac{1}{8}(x_5^2+x_5-2x_6x_7).\\
\end{gathered}\]
To find a joint local minimizer for them,
we consider the matrix polynomial optimization (\ref{PMI:joint:Hess>=0}) and solve (\ref{kmom:MPOP}).
For the relaxation order $k=3$, we get a minimizer $y^*$ and the flat truncation condition (\ref{eq:ft}) holds with $t=2$.
By Algorithm~\ref{alg:getminimizer},
we get $f^{smo}_4 = -0.0703$ and four minimizers:
\begin{align*}
	x^{(1)} &= (-0.2500, -0.2500, 0.2500, 0.2500, 0.2500, -0.2500, 0.2500), \\
	x^{(2)} &= (-0.2500, -0.2500, 0.2500, 0.2500, 0.2500, 0.2500, -0.2500), \\
	x^{(3)} &= (0.2500, 0.2500, 0.2500, 0.2500, 0.2500, -0.2500, 0.2500),\\
	x^{(4)} &= (0.2500, 0.2500, 0.2500, 0.2500, 0.2500, 0.2500, -0.2500).
\end{align*}
It took around 0.27 second.
Moreover, one may check that for every $i=1\ddd 3$ and $j=1\ddd 4$, it holds
$\nabla^2 f_i(\udt{i}^{(j)})\succ 0.$
Therefore, all of $u^{(1)}\ddd u^{(4)}$ are joint local minimizers of $f_1,f_2$ and $f_3$.
\end{example}

\begin{example}
In Example~\ref{eg_joint}, if we change $f_2$ to
\[
f_2  =  x_3^4 + x_4^4+x_5^4,
\]
then the sparse matrix moment relaxation (\ref{kmom:MPOP})
is infeasible for the relaxation order $k=4$.
This means (\ref{PMI:joint:Hess>=0}) is infeasible and there do not exist joint minimizers.
Therefore, we consider the regularized optimization problem
\be \label{joint:purturb}
\left\{
\begin{array}{cl}
	\displaystyle\min_{x\in \re^7, z\in\re^3} &   z_1 + z_2 + z_3  \\
	\st & \left[\begin{array}{cc}
		z_i & \nabla f_i(\xdt{i})^T   \\
		\nabla f_i(\xdt{i}) &  z_i I_{n_i}+\nabla^2  f_i(\xdt{i})
	\end{array} \right] \succeq 0, \ i = 1,2,3.
\end{array}\right.
\ee
For each $i$, let $\hat{x}_{\Dt_i}\coloneqq (\xdt{i},z_i)$.
Then (\ref{joint:purturb}) is a new sparse matrix polynomial optimization problem.
We solve the sparse matrix Moment-SOS relaxation (\ref{ksos:MPOP})-(\ref{kmom:MPOP}) with $k=4$,
and get a lower bound $0.0017$ for the minimum value of (\ref{joint:purturb}).
By Algorithm~\ref{alg:getminimizer}, the flat truncation condition (\ref{eq:ft})
holds with $t=1$, and we get four minimizers $(x^{(j)},z^{(j)})\, j=1\ddd 4$ for (\ref{PMI:joint:Hess>=0}), which are:
\[
\begin{gathered}
	x^{(1)} = (-0.2500,-0.2500,0.2260,0.0000,0.2260, -0.2500, 0.2500),\\
	x^{(2)} = (-0.2500,-0.2500,0.2260,0.0000,0.2260, 0.2500, -0.2500),\\
	x^{(3)} = (0.2500,0.2500,0.2260,0.0000,0.2260, -0.2500, 0.2500),\\
	x^{(4)} = (0.2500,0.2500,0.2260,0.0000,0.2260, 0.2500, -0.2500),\\
	z^{(1)} =z^{(2)} =z^{(3)} =z^{(4)} = (0.0007,0.0069,0.0007).\\
\end{gathered}
\]
Furthermore, for all $j=1\ddd 4$, we have
\[ \Vert\nabla f_1(\xdt{1}^{(j)})\Vert = 0.0283,\ \Vert\nabla f_2(\xdt{2}^{(j)})\Vert = 0.0653,\ \Vert\nabla f_3(\xdt{3}^{(j)})\Vert = 0.0283,  \]
\[ \nabla^2 f_1(\xdt{1}^{(j)}) \succ 0 ,\ \nabla^2 f_2(\xdt{2}^{(j)}) \succ 0,\ \nabla^2 f_3(\xdt{3}^{(j)}) \succ 0.  \]
It took around 6.42 seconds.
\end{example}

\subsection{Center points for sets given by PMIs}

Let $G_1\ddd G_m$ be given matrix polynomials in $z\in\re^{n}$.
For each $i$, consider the semialgebraic set
\be\label{eq:Pi}
P_i \,\coloneqq\, \{ z\in \re^{n} : G_i(z) \succeq 0 \}.
\ee
The sets $P_1 \ddd P_m$ may or may not intersect.
We look for a point $v \in\re^{n}$
such that the sum of squared distances from $v$ to all $P_i$ is minimum.
This can be formulated as the optimiation problem
\be\label{eq:spec_projection}
\left\{
\begin{array}{cl}
\displaystyle\min_{z_1, \ldots, z_m, v \in\re^{n} } &  \displaystyle \sum_{i=1}^m \| z^{(i)} - v  \|^2 \\
\st & G_i(z^{(i)})\succeq 0, \ i=1\ddd m.
\end{array}\right.
\ee
Let $x\coloneqq (z^{(1)}\ddd z^{(m)}, v)$, and denote
\[
\xdt{i}\coloneqq (z^{(i)},v), \quad f_i(\xdt{i})\coloneqq \Vert z^{(i)} - v\Vert^2, \quad i=1\ddd m.
\]
Then, (\ref{eq:spec_projection}) is a sparse matrix polynomial optimization problem in the form of (\ref{spar:matPOP}).
For every minimizer $x^* = (z^{(1,*)}\ddd z^{(m,*)},v^*)$ of \reff{eq:spec_projection}, the point $z^{(i,*)}$ is the projection of $v^*$ to $P_i$.
Note that if $P_1 \cap \cdots \cap P_m \ne \emptyset$, then the minimum value of \reff{eq:spec_projection} is $0$.

\begin{example}
\label{ex:projection}
Consider the matrix polynomial
\[
F(z) =
\left[
\begin{array}{rrr}
	z_1^2 + z_3^2 & -z_1z_2 & -z_1z_3 \\
	-z_1z_2 & z_2^2+z_1^2 & -z_2z_3 \\
	-z_1z_3& -z_2z_3 &  z_3^2+z_2^2
\end{array}
\right].
\]
Let $G_i(z) \, \coloneqq \, I_3 - F(z-c_i)$, where
\[
c_1 = (2, 0, 0), \quad c_2 = (0, 2, 0), \quad c_3 = (0, 0, 2).
\]
The matrix polynomial $F(z)$ is SOS-convex (see Example~\ref{soscon}),
thus the sparse moment relaxation \reff{kmom:MPOP} is tight for all relaxation orders.
We solve (\ref{kmom:MPOP}) for $k=1$ and get $f_{\min} = f^{smo}_1 = 1.4291$.
Moreover, we find a minimizer of (\ref{eq:spec_projection}), which gives the center point $v^*$ and its projections $z^{(1,*)}, z^{(2,*)}, z^{(3,*)}$:
\[
\begin{aligned}
	v^*= (0.8591,    0.8591,    0.8591), \quad &z^{(1,*)} = (1.4226,    0.5774,    0.5774), \\
	z^{(2,*)}= (0.5774,    1.4226,    0.5774), \quad  &z^{(3,*)} = (0.5774,    0.5774,    1.4226).
\end{aligned}
\]
It took around 0.16 second.
\end{example}

\begin{example}
Consider the matrix polynomials
\[
G_1 = \bbm \frac{z_1}{2} & z_1^2+1 \\
z_1^2+1 & \frac{z_2}{2}
\ebm, \quad
G_2 = \bbm \frac{z_2}{2} & z_2^2+1 \\
z_2^2+1 & \frac{z_3}{2}
\ebm, \quad
G_3 = \bbm \frac{z_1}{2} & z_3^2+1 \\
z_3^2+1 & \frac{z_3}{2}
\ebm.
\]
Any two of $P_1, P_2, P_3$ intersect, but $P_1 \cap P_2 \cap P_3 = \emptyset$.
This is because if all $G_i(z) \succeq 0$, then it holds
\[A =
\bbm z_1 & z_1^2+1 & z_3^2+1\\
z_1^2+1 & z_2 & z_2^2+1 \\
z_3^2+1 & z_2^2+1 & z_3
\ebm \succeq 0.
\]
However, there is no $z$ satisfying the above.
By Algorithm~\ref{alg:getminimizer}, we get $f_{\min} = f^{smo}_1 = 206.3980$ and get a minimizer of (\ref{eq:spec_projection}), which gives the center point $v^*$ and its projections $z^{(1,*)}, z^{(2,*)}, z^{(3,*)}$:
\[
\begin{aligned}
	v^* = (6.4613,    6.4613,    6.4613),\quad & z^{(1,*)} = (0.5960,   12.3262,    6.4615),\\
	z^{(2,*)} = (6.4615,    0.5960,   12.3262),\quad & z^{(3,*)} = (12.3262,    6.4615,    0.5960).
\end{aligned}
\]
It took around 0.18 second.
\end{example}

\subsection{Multisystem static $\mc{H}_2$ controller synthesis} \label{sc:control}
The static $\mc{H}_2$ controller synthesis \cite{Hol04,SchererWeiland2000} is a classical problem in optimal control that has wide applications in power systems, robotics, industrial process control, etc.
For $i=1\ddd m$, 
let $A_i\in \re^{a_i\times a_i}$,
$B_{i}\in\re^{a_i\times b_i}$,
$C_{i}\in\re^{a_i\times p}$,
$D_{i}\in\re^{c_i \times a_i}$, and
$E_{i}\in\re^{q\times a_i}$.
Let $K = (K_{jk}) \in \re^{p\times q}$ be the static controller matrix,
and let each 
$
X_i = \big((X_{i})_{jk}\big) \in \mc{S}^{a_i}
$
be the Lyapunov matrix for the $i$th system.
The multisystem static $\mc{H}_2$ controller synthesis problem is to find a controller matrix $K^*$ and Lyapunov matrices $X_1^*\ddd X_m^* \succeq 0$ that solve the following optimization problem
\be\label{eq:H2_controller} 
\left\{
\begin{array}{cl}
	\displaystyle\min_{K,X_1\ddd X_m} & \tr{D_{1}X_1D_{1}^T} + \cdots + \tr{D_{m}X_mD_{m}^T}\\
	\st & {\mc{A}_i}(K)X_i + X_i{\mc{A}_i}(K)^T + B_{i}B_{i}^T = 0, \\
	& X_i\succeq 0,\, i=1\ddd m, \\
	& \xi I_p-KK^T\succeq 0.
	\end{array}\right.
\ee
In the above, each ${\mc{A}_i}(K)$ is the closed-loop matrix for the $i$th system:
\[ {\mc{A}_i}(K)\coloneqq A_i+C_{i}KE_{i}, \]
the equality constraints are called Lyapunov equations,
and the matrix inequality constraint $\xi I_p-{KK^T}\succeq 0$ gives an upper bound for the spectral norm of $K$ that caps its peak amplification in every input direction.
It is clear that (\ref{eq:H2_controller}) is nonconvex.
Denote by $z_0, z_i$ the vectorization of $K$ and the upper-triangular part of $X_i$ respectively, i.e.,
\[ 
z_0\coloneqq (K_{11},K_{12},\ldots K_{pq}),\quad
z_i = \big((X_{i})_{11}, (X_{i})_{12} \ddd (X_{i})_{22},\ldots, (X_{i})_{a_ia_i}\big). 
\]
Let $x \coloneqq (z_0,z_1\ddd z_m)$, and let $\xdt{i}\coloneqq (z_0,z_i).$
Then $x \in \re^{n}$ and $\xdt{i} \in \re^{n_i}$, with 
\[ n = pq+ a_1(a_1 +1)/2 + \cdots + a_m(a_m +1)/2,\quad n_i = pq+a_i (a_i +1)/2. \]
Problem (\ref{eq:H2_controller}) becomes a sparse matrix polynomial optimization problem in the form of (\ref{spar:matPOP}).

\begin{example}
Consider the multisystem static $\mc{H}_2$ controller synthesis problem (\ref{eq:H2_controller}) with $m = 4$.
We set $\xi = 10$ and 
\[
A_1 = \left[\begin{array}{rr} -2 & 2 \\ 2 & 1 \end{array}\right],\ 
A_2 = \left[\begin{array}{rr} 1 & -3 \\ 1 & -2 \end{array}\right],\ 
A_3 = \left[\begin{array}{rr} 1 & -1 \\ 3 & -2 \end{array}\right],\ 
A_4 = \left[\begin{array}{rr} 0 & -1 \\ 2 & -2 \end{array}\right],
\]
\[
B_{1} = \left[\begin{array}{rr} -4 & 2 \\ -3 & 2 \end{array}\right],\ 
B_{2} = \left[\begin{array}{rrr} -3 & -4 & -1 \\ -2 & -1 & 0 \end{array}\right],\ 
B_{3} = \left[\begin{array}{rrr} 0 & 3 & -1 \\ 1 & 0 & 2 \end{array}\right],\ 
B_{4} = \left[\begin{array}{rrr} -1 & -2 & 1 \\ -1 & 1 & -2 \end{array}\right],
\]
\[
C_{1} = \left[\begin{array}{rr} 2 & -1 \\ 0 & 1 \end{array}\right],\ 
C_{2} = \left[\begin{array}{rr} 2 & -1 \\ -2 & 3 \end{array}\right],\ 
C_{3} = \left[\begin{array}{rr} 0 & 1 \\ 1 & 0 \end{array}\right],\ 
C_{4} = \left[\begin{array}{rr} 0 & 1 \\ 1 & -1 \end{array}\right],
\]
\[
D_{1} = \left[\begin{array}{rr} 2 & -2 \\ 2 & 2 \\ -1 & -1 \end{array}\right],\ 
D_{2} = \left[\begin{array}{rr} 2 & -1 \\ 3 & -1 \end{array}\right],\ 
D_{3} = \left[\begin{array}{rr} 1 & -3 \\ -1 & 2 \\ 3 & 3 \end{array}\right],\ 
D_{4} = \left[\begin{array}{rr} 2 & -2 \\ 0 & 1 \\ -2 & -2 \end{array}\right],
\]
\[
E_{1} = \left[\begin{array}{rr} 1 & 3 \\ 2 & 2 \end{array}\right],\ 
E_{2} = \left[\begin{array}{rr} -1 & 1 \\ 2 & 0 \end{array}\right],\ 
E_{3} = \left[\begin{array}{rr} 3 & -1 \\ 1 & 4 \end{array}\right],\ 
E_{4} = \left[\begin{array}{rr} 0 & -1 \\ 3 & 0 \end{array}\right].
\]
By Algorithm~\ref{alg:getminimizer}, we get $f_{\min} = f^{smo}_2 = 81.0282$ for $k=2$. The optimal controller $K^*$ and Lyapunov matrices $X_{1}^*\ddd X_{4}^*$ are
\[
\begin{gathered}
	K^* = \left[\begin{array}{rr}    -0.3826 &  -1.5343 \\   -1.3829 &  -0.7662 \end{array}\right],\quad
	X_1^* = \left[\begin{array}{rr}  1.5084  &  1.2904 \\   1.2904  &  1.1362 \end{array}\right], \quad
	X_2^* = \left[\begin{array}{rr}  2.0957  &  1.7426 \\   1.7426  &  2.3817 \end{array}\right], \\
	X_3^* = \left[\begin{array}{rr}  1.4175  &  -0.2049 \\   -0.2049  &  0.3140 \end{array}\right], \quad
	X_4^* = \left[\begin{array}{rr}  1.2119  &  -0.5593 \\   -0.5593  &  1.0566 \end{array}\right].      
\end{gathered}
\]
It took around 1.33 seconds.
In comparison, we also apply the dense relaxation (\ref{dense_mom}) to solve this problem.
The optimal value of (\ref{dense_mom}) equals $0$ when $k=1$, which is not tight.
When $k=2$, the dense relaxation took 150.25 seconds to get the global minimum and minimizers of (\ref{eq:H2_controller}), which are the same as those returned by Algorithm~\ref{alg:getminimizer}.
\end{example}

\subsection{Some random matrix optimization problems}

\begin{example}\label{ex:quad_random_sosconvex}
Consider the matrix polynomial optimization problem
\be\label{eq:qcquadsdp}
\left\{
\begin{array}{cl}
	\displaystyle\min_{x\in \re^n} & \sum\limits_{i=1}^m\underbrace{\big({\xdt{i}^{[2]}\big)}^T D_i\xdt{i}^{[2]} + \xdt{i}^T Q_i\xdt{i} + p_i^T\xdt{i}}_{f_i}\\
	\st & G_i(\xdt{i})\succeq 0, \ i=1\ddd m.
\end{array}\right.
\ee
In the above, each set $\Delta_i$ is selected as
\be\label{eq:Dt_i}
\Dt_i \,\coloneqq\, \{ j \in [n] : 1 \le j-(\omega-1)(i-1) \le \omega \},
\ee
and
\[
x_{\Dt_i}^{[2]}  \,  \coloneqq  \,  (x_j^2)_{j\in \Dt_i} .
\]
The cardinality of each $\Dt_i$ is $\omega$, and $m,n$
are integers such that $(\omega-1)m+1 = n$.
We randomly generate $D_i\coloneqq \hat{D}^T\hat{D}$
with $\hat{D} = {\tt rand}(\omega)$ in {\tt MATLAB}.
So, $D_i$ is psd and has only nonnegative entries.
We also randomly generate $Q_i \coloneqq \hat{Q}^T\hat{Q}$ with $\hat{Q} = {\tt randn}(\omega)$ and $p_i \coloneqq {\tt randn}(\omega,1)$ in {\tt MATLAB}.
So $Q_i$ is psd but may have negative entries.
Thus, each $f_i$ is SOS-convex; see \cite[Example~7.1.4]{nie2023moment}.
Moreover, we let $G_i$ be the $\ell$-by-$\ell$ matrix polynomial
randomly generated as
\be\label{eq:quadratic_Gi}
G_i(\xdt{i}) \coloneqq C_i +\sum_{s \in \Dt_i} B_{i,s}x_{s} -
(x_{\Dt_i} \otimes I_{\ell})^T  A_i (x_{\Dt_i} \otimes I_{\ell}),
\ee
where each $C_i\in\mc{S}^{\ell}_+$ and $A_i\in\mc{S}_+^{\ell\omega}$
are randomly generated in the same way as for $Q_i$, and each
$B_{i,s} = \hat{B} + \hat{B}^T$ with $\hat{B} = {\tt randn}(\ell)$ in {\tt MATLAB}.
For such choices, each set $K_{\Dt_i}$ is nonempty
(it contains the origin) and each $G_i(\xdt{i})$
is SOS-concave (see the case (iii) on the bottom of page~404 of \cite{nie2011PMI}).
By Theorem~\ref{tm:sosconvex}, we have $f^{smo}_k = f_{\min}$ for all $k\ge 2$.

\bnum

\item[(i)] We consider the values $m=5,10,15,20$, $\omega = 5, 10$ and $\ell = 5, 10$.
For each case of $(\omega, \ell, m)$, we generate $10$
random instances and solve the respective sparse moment relaxations
(\ref{kmom:MPOP}) for order $k=2$.
The dense moment relaxations \reff{dense_mom} are solved for the same order $k$.
The average computational time (in seconds) is reported in Table~\ref{tab:quadQCSDP}.
The time for solving the sparse relaxation is displayed on the left,
and the time for solving the dense relaxation is displayed on the right.
The text ``{\tt oom}" means that the computer is out of memory for the computation.
\begin{table}
	\caption{ Computational time (in seconds) for solving (\ref{eq:qcquadsdp}) generated in Example~\ref{ex:quad_random_sosconvex} (i)
		by the sparse moment relaxation \reff{kmom:MPOP}, shown on the left,
		and by the dense moment relaxation \reff{dense_mom}, shown on the right.
		The text ``{\tt oom}" means the computer is out of memory.  }
	\label{tab:quadQCSDP}
	\begin{tabular}{|l|c|c|c|c|}\hline
		& $m=5$ & $m=10$ & $m=15$ & $m=20$  \\ \hline
		$(\omega, \ell) = (5,5)$   & (0.65, 1879.96) & (0.99, oom)  & (1.06, oom) & (1.20, oom) \\ \hline
		$(\omega, \ell) = (5,10)$  & (2.08, oom) &  (5.97, oom) & (9.02, oom) & (12.21, oom) \\ \hline
		$(\omega, \ell) = (10,5)$  & (0.99, oom) & (26.61, oom)  & (30.47, oom) & (37.22, oom) \\ \hline
		$(\omega, \ell) = (10,10)$ & (73.45, oom) & (169.83, oom)  & (212.81, oom) & (617.06, oom) \\ \hline
	\end{tabular}
\end{table}

\item[(ii)]
To demonstrate the scalability of our sparse relaxations, 
we test on the sparse problem \reff{eq:qcquadsdp} for various substantially large values of $n$ and different relaxation orders.
Numerical results are presented in Table~\ref{tab:largescale}. 
For each case of $(\omega, \ell, m)$, we generate $10$ random instances and solve the corresponding sparse moment relaxations for $k=2$ and $k=3$. The notation ``{\tt oom}'' indicates that the computation ran out of memory.
\enum

\begin{table}
	\caption{ Computational time (in seconds) for solving (\ref{eq:qcquadsdp}) generated in Example~\ref{ex:quad_random_sosconvex} (ii)}
	\label{tab:largescale}
	\begin{tabular}{|l|c|c|c|c|}\hline
		& $n$ & $k=2$ & $k=3$  \\ \hline
		$(\omega, \ell, m) = (3,3,100)$ & 201 & 0.86 & 7.61   \\ \hline
		$(\omega, \ell, m) = (3,3,300)$ & 601 & 2.04 & 24.11   \\ \hline
		$(\omega, \ell, m) = (3,3,500)$ & 1001 & 4.16 & oom   \\ \hline
		$(\omega, \ell, m) = (3,3,1000)$ & 2001 & 10.45 & oom   \\ \hline
		$(\omega, \ell, m) = (5,3,50)$ & 201 & 1.59 & 99.65  \\ \hline
		$(\omega, \ell, m) = (5,3,100)$ & 401 & 3.40 & 129.79   \\ \hline
		$(\omega, \ell, m) = (5,3,300)$ & 1201 & 9.43 & oom   \\ \hline
		$(\omega, \ell, m) = (5,5,30)$   & 121 & 2.61 & 272.30   \\ \hline
		$(\omega, \ell, m) = (5,5,50)$   & 201 & 3.42 & 2026.52   \\ \hline
		$(\omega, \ell, m) = (5,5,100)$   & 401 & 8.73 & oom   \\ \hline
		$(\omega, \ell, m) = (5,5,200)$   & 801 & 16.77 & oom   \\ \hline
		$(\omega, \ell, m) = (10,5,30)$   & 271 & 49.60 & oom   \\ \hline
		$(\omega, \ell, m) = (10,5,50)$   & 451 & 94.63 & oom   \\ \hline
		$(\omega, \ell, m) = (10,10,20)$   & 226 & 617.06 & oom   \\ \hline
		$(\omega, \ell, m) = (10,10,30)$   & 271 & oom  & oom  \\ \hline
	\end{tabular}
\end{table}
\end{example}

\begin{example}\label{ex:random_nonconvex}
We still consider the sparse matrix polynomial optimization problem (\ref{eq:qcquadsdp}).
For each $i$, we randomly generate $D_i  \coloneqq \hat{D} + \hat{D}^T$
with $\hat{D} = {\tt randn}(\omega)$ in {\tt MATLAB},
and we randomly generate $Q_i$, $p_i$, $G_i$
in the same way as in Example~\ref{ex:quad_random_sosconvex}.
Then, the generated problem (\ref{eq:qcquadsdp}) is typically nonconvex.
We consider the values
\[
m=5,10,15,20, \qquad \omega = 5, 10, \qquad \ell = 5, 10.
\]
For each case of $(m, \omega, \ell)$, we generate $10$
random instances. 
We solve the corresponding sparse moment relaxations~\reff{kmom:MPOP} for $k=2$ and $k=3$, and apply Algorithm~\ref{alg:getminimizer} to check the tightness of the relaxation and extract minimizers. The computational results are reported in Tables~\ref{tab:quadQCSDP_nonconvexk=2} and \ref{tab:quadQCSDP_nonconvexk=3}. The number of random instances (out of ten) for which \reff{kmom:MPOP} is tight is displayed in parentheses. The average computational time is reported in seconds.
For comparison, we also solve the dense relaxation~\reff{dense_mom} with $k=2$ for each random instance. The dense relaxation~\reff{dense_mom} is only solvable for $m = \omega = \ell = 5$, with an average computational time of $1861.02$ seconds. 
For all other values of $(m, \omega, \ell)$, the dense relaxation~\reff{dense_mom} is not solvable due to memory shortage.
We also note that when increasing the relaxation order to $k=3$, most instances encounter out-of-memory issues, except for the cases $(\omega, \ell) = (5,5)$ and $(\omega, \ell, m) = (10, 5, 5)$.
Interestingly, although the sparse matrix polynomial optimization problems are generally nonconvex, the sparse moment relaxations are tight with $k=2$ in most cases. One possible reason is that the constraints are given by SOS-convex matrix polynomials, and the objective function is the sum of a convex quadratic function and a typically nonconvex quartic form. Investigating the tightness of the sparse Moment-SOS hierarchy for nonconvex matrix polynomial optimization with special structures is an interesting direction for future work.

\begin{table}
	\caption{
		Computational time (in seconds) for solving the nonconvex optimization
		\reff{eq:qcquadsdp} generated in Example~\ref{ex:random_nonconvex}
		by the sparse moment relaxation \reff{kmom:MPOP} with $k=2$.
		The number of instances for which \reff{kmom:MPOP} is tight
		is shown inside the parenthesis. }
	\label{tab:quadQCSDP_nonconvexk=2}
	\begin{tabular}{|l|c|c|c|c|}\hline
		$k = 2$ & $m=5$ & $m=10$ & $m=15$ & $m=20$  \\ \hline
		$(\omega, \ell) = (5, 5)$ & 2.49 (10) & 4.77 (10)  & 7.20 (9) & 9.68 (9) \\ \hline
		$(\omega, \ell) = (5, 10)$ & 5.13 (10) &  10.86 (10) & 17.28 (10) & 23.18 (10) \\ \hline
		$(\omega, \ell) = (10, 5)$ & 45.97 (10) & 95.29 (9)  & 148.81 (8) & 204.75 (8) \\ \hline
		$(\omega, \ell) = (10,10)$ & 113.42 (10) & 245.34 (10)  & 406.90 (10) & 611.10 (10) \\ \hline
	\end{tabular}
\end{table}
\begin{table}
	\caption{
		Computational time (in seconds) for solving the nonconvex optimization
		\reff{eq:qcquadsdp} generated in Example~\ref{ex:random_nonconvex}
		by the sparse moment relaxation \reff{kmom:MPOP} with $k=3$.
		The number of instances for which \reff{kmom:MPOP} is tight
		is shown inside the parenthesis. }
	\label{tab:quadQCSDP_nonconvexk=3}
	\begin{tabular}{|l|c|c|c|c|}\hline
		$k = 3$ & $m=5$ & $m=10$ & $m=15$ & $m=20$  \\ \hline
		$(\omega, \ell) = (5, 5)$ & 38.05 (10) & 84.49 (10)  & 137.06 (9) & 192.77 (10) \\ \hline
		$(\omega, \ell) = (5, 10)$ & oom &  oom & oom & oom \\ \hline
		$(\omega, \ell) = (10, 5)$ & 947.41 (10) & oom & oom & oom \\ \hline
		$(\omega, \ell) = (10,10)$ & oom & oom  & oom & oom \\ \hline
	\end{tabular}
\end{table}
\end{example}

\section{Conclusions}
\label{sc:con}

This paper studies the sparse polynomial optimization problem with matrix constraints,
given in the form \reff{spar:matPOP}. We study the sparse matrix Moment-SOS hierarchy of
\reff{ksos:MPOP}-\reff{kmom:MPOP} to solve it.
First, we prove a sufficient and necessary condition for this sparse hierarchy to be tight.
This is the condition~\reff{eq:p+fmin_in_idl} shown in Theorem~\ref{thm:sepa:match}.
We also discuss how to detect the tightness and how to extract minimizers.
The main criterion is to use flat truncation \reff{eq:ft}, which is justified in Theorem~\ref{tm:getminimizer}.
When this optimization problem is convex,
we prove the sufficient and necessary condition
for the tightness holds under some general assumptions.
In particular, when the problem is SOS-convex,
we show that the sparse matrix Moment-SOS relaxation is tight for all relaxation orders.
These results are shown in Theorems~\ref{tm:convex} and \ref{tm:sosconvex}.
Numerical experiments are provided to demonstrate how Algorithm~\ref{alg:getminimizer} can be applied to detect tightness of sparse matrix Moment-SOS relaxations and extract minimizers for \reff{spar:matPOP}.

\bigskip
\noindent
{\bf Acknowledgments}
This project was begun at a SQuaRE at the American Institute of Mathematics (AIM).
The authors would like to thank AIM for providing
a supportive and mathematically rich environment.

\bigskip
\noindent
{\bf Funding}
Jiawang Nie and Linghao Zhang are partially supported by the NSF grant DMS-2110780.
Xindong Tang is partially supported by the Hong Kong Research Grants Council HKBU-15303423.

\end{document}